\documentclass[12pt]{article}
\usepackage[T1]{fontenc}
\usepackage{lmodern}
\usepackage[utf8]{inputenc}
\usepackage{libertinus}
\usepackage[margin=1in]{geometry}
\usepackage{amsmath,amssymb,amsthm,mathtools}
\usepackage{booktabs}
\usepackage{enumitem}
\usepackage{microtype}
\usepackage[colorlinks=true,linkcolor=blue,citecolor=blue,urlcolor=blue]{hyperref}
\theoremstyle{definition}
\usepackage[nameinlink,noabbrev]{cleveref}
\newtheorem{theorem}{Theorem}
\newtheorem{conjecture}{Conjecture}
\newtheorem{definition}{Definition}
\newtheorem{lemma}{Lemma}

\newtheorem{question}{Question}
\newtheorem{claim}{Claim}
\newtheorem{subclaim}{Claim}[claim]

\newtheorem{proposition}{Proposition}

\newcommand{\ex}{{\rm ex}}
\newcommand{\EX}{{\rm EX}}

\newcommand{\spex}{{\rm spex}}
\newcommand{\SPEX}{{\rm SPEX}}

\newcommand{\wh}{\widehat}
\newcommand{\cM}{\mathcal{M}}
\newcommand{\cS}{\mathcal{S}}

\newcommand{\floor}[1]{\left\lfloor #1 \right\rfloor}
\newcommand{\ceil}[1]{\left\lceil #1 \right\rceil} 
\begin{document}
	\title{Spectral Tur\'an problems for suspensions of balanced trees}
	\author{Yaoxiang Di
			\thanks{School of Mathematical Sciences, Anhui University,	Hefei  230601, P.~R.~China. Email:{\tt dyxlgkk@stu.ahu.edu.cn}. Supported in part by the National Natural Science Foundation of China (No.12471319) and Anhui Provincial Department of Education Research Project (No. 2025AHGXZK50063).}
			\and Chunyang Dou \thanks{School of Mathematical Sciences, Anhui University, Hefei  230601, P.~R.~China. Email:{\tt chunyang@stu.ahu.edu.cn}. Supported in part by the National Natural Science Foundation of China (No.12471319) and Anhui Provincial Department of Education Research Project (No. 2025AHGXZK50154).
			}
}
	\date{}
	\maketitle
\begin{abstract}
A central problem in spectral Tur\'an theory is to understand the relationship between the spectral extremal family $\SPEX(n,F)$ and the ordinary extremal family $\EX(n,F)$. For many forbidden graphs $F$, it is known that $\SPEX(n,F)\subseteq\EX(n,F)$ holds for infinitely many $n$, while only a few examples have been identified where the two families are disjoint. In this paper, we study this problem for suspensions of balanced trees.  A tree is balanced if its two bipartition classes differ in size by at most one. Let $T$ be a balanced tree on $2k$ or $2k+1$ vertices and  $\widehat T$ be its suspension which is  obtained from $T$ by adding one new vertex adjacent to every vertex of $T$. Our first main result establishes  a tight  upper bound for  the spectral Tur\'an number of $\widehat T$ for sufficiently large $n$ provided that $T$ satisfies some mild assumptions. Our second result determines for which integers $k$ and which non-path balanced trees $T$ on $2k$ or $2k+1$ vertices there are infinitely many integers $n$ such that $\EX(n,\widehat{T})\cap \SPEX(n,\widehat{T})=\emptyset$.

\medskip
 \noindent {\bf Keywords:}   Spectral Tur\'an number; suspension; tree
\end{abstract}

\section{Introduction}
Given a graph $F$, a graph $G$ is called $F$-free if it contains no copy of $F$ as a subgraph.  The \emph{Tur\'an number} $\ex(n,F)$ is the maximum number of edges in an $n$-vertex $F$-free graph.  An $n$-vertex $F$-free graph with $\ex(n,F)$ edges is called an extremal graph for $\ex(n,F)$, and the family of all such graphs is denoted by $\EX(n,F)$.

For a graph $G$, let $\rho(G)$ denote the largest eigenvalue of its adjacency matrix, also called the \emph{spectral radius} of $G$.  In analogy with the Tur\'an number, the \emph{spectral Tur\'an number} $\spex(n,F)$ is the maximum value of $\rho(G)$ over all $n$-vertex $F$-free graphs $G$.  An $n$-vertex
$F$-free graph with spectral radius $\spex(n,F)$ is called an extremal graph for $\spex(n,F)$, and the family of all such graphs is denoted by $\SPEX(n,F)$.  Since the average degree of a graph is at most its spectral radius, we have
\[
\ex(n,F)\le \tfrac{n}{2}\spex(n,F).
\]
Motivated by this inequality, spectral Tur\'an problems have been studied as a way to obtain upper bounds for  ordinary Tur\'an problems.  The systematic study of $\spex(n,F)$ was initiated by Nikiforov \cite{N10}, following several earlier sporadic results.  Nikiforov \cite{N02} also obtained a celebrated theorem for $\spex(n,K_{r+1})$, which implies the value of $\ex(n,K_{r+1})$ and hence recovers Tur\'an's theorem \cite{T41}. Let the friendship graph $F_k$ be  the graph obtained by adding a new vertex adjacent to every vertex of the matching $M_k$ of size $k$. More recently, Cioab\u{a}, Feng, Tait, and Zhang \cite{CFTZ20} studied the maximum spectral radius of graphs without the friendship graph $F_k$ for sufficiently large $n$ and proved that $\SPEX(n,F_k)\subseteq\EX(n,F_k)$ for infinitely many integers $n$.  This result prompted substantial further work. For more results on when $\SPEX(n,F)\subseteq\EX(n,F)$ for infinitely many integers $n$, we refer the reader to \cite{D22,FTZ24,LP22,NWK23, WNKF,WKX23}.

However, the spectral radius and the edge count measure different features of a graph.  A simple example illustrates the distinction.  Let $m$ be large and choose $n=\binom{m}{2}+1$. The star $K_{1,n-1}$ and $H = K_m \cup (n-m)K_1$ have the same number of edges, but $\rho(K_{1,n-1})=\sqrt{n-1}$ while $\rho(H)=m-1\ge\sqrt{2n}-1$. Thus the spectral radius can distinguish graphs with the same edge count when their edges are organized very differently. It is therefore natural to ask not only when ordinary and spectral extremal graphs coincide, but also when their extremal families are disjoint:
\[
\SPEX(n,F) \cap \EX(n,F) = \emptyset
\]
for infinitely many integers $n$.

We now review the related results.  For a graph $F$, the \emph{suspension} $\widehat F$ is the graph obtained by adding a new vertex adjacent to every vertex of $F$.  For example, the wheel $W_{\ell}$ is the suspension of a cycle $C_{\ell-1}$ on $\ell-1$ vertices, and the kipas graph $\widehat P_\ell$ is the suspension of a path $P_\ell$ on $\ell$ vertices. Yuan \cite{Y21} determined the Tur\'an number of $W_{2k+1}$-free graphs and characterized all ordinary extremal graphs for sufficiently large $n$.  Later, Cioab\u{a}, Desai, and Tait \cite{CDT22} determined the corresponding spectral Tur\'an number and described all spectral extremal graphs for sufficiently large $n$.  Combining these results, for every $k\ge 9$, there exist infinitely many integers $n$ such that $\EX(n,W_{2k+1})\cap\SPEX(n,W_{2k+1})=\emptyset$.  Yuan \cite{Y21} also studied the Tur\'an number of the kipas graph $\widehat P_\ell$ for sufficiently large $n$, and the ordinary extremal graphs can be deduced from his proof.  Subsequently, Zhang \cite{Z} determined the corresponding spectral Tur\'an number.  Consequently, for every $k\ge 9$, there exist infinitely many integers $n$ such that $\EX(n,\widehat P_{2k})\cap\SPEX(n,\widehat P_{2k})=\emptyset$ and $\EX(n,\widehat P_{2k+1})\cap\SPEX(n,\widehat P_{2k+1})=\emptyset$.

Since a path is a special tree, it is natural to consider the suspension of a more general tree. A tree on a single vertex is called a \emph{trivial} tree. We shall use the following famous conjecture of Erd\H{o}s and S\'os.
\begin{conjecture}
For any  nontrivial tree $T_q$ on $q$ vertices, $\ex(n,T_q)\le \frac{q-2}{2}n$.
\end{conjecture}
Although the Erd\H{o}s-S\'os conjecture remains open in general, some special cases have been established, including trees of diameter at most four \cite{M05} and trees in which a vertex is adjacent to at least $\frac{|V(T)|-1}{2}$ leaves \cite{S89}. A proof for very large trees was announced in 2008 by Ajtai, Koml\'os, Simonovits, and Szemer\'edi.

A tree $T$ is called balanced if the two bipartition classes differ in size by at most one. Let $T$ be a balanced tree on $2k$ or $2k+1$ vertices, and let $\widehat T$ be  its suspension.  Zhu, Wang, Zhang, and Zhang \cite{Zhu} studied $\ex(n,\widehat{T})$ and proved the following theorem.
\begin{theorem}[Zhu, Wang, Zhang, and Zhang \cite{Zhu}]\label{classical}
Let $T$ be a balanced tree on $2k$ or $2k + 1$ vertices, and suppose the Erd\H{o}s--S\'os conjecture holds for all of its nontrivial subtrees.  When
$n \geq 4(4k)^6$, we have
\[
\ex(n, \widehat{T}) \leq \max\left\{ n_0 n_1 + \left\lfloor \frac{(k - 1)n_0}{2} \right\rfloor : n_0 + n_1 = n \right\}.
\]

Moreover,
\begin{enumerate}
    \item If $2(k - 1) \mid \left\lfloor \dfrac{2n + k}{4} \right\rfloor$, then equality holds.
    \item If $T$ is on $2k + 1$ vertices and $2 \mid \left\lfloor \dfrac{2n + k}{4} \right\rfloor$, then equality holds.
    \item If $T$ contains a matching of size $k$, then equality holds for every $n$.
\end{enumerate}
\end{theorem}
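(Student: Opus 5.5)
The plan has two halves: a stability-type argument for the upper bound, and explicit constructions for the equality cases.

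\emph{Upper bound.} Let $G$ be an $n$-vertex $\widehat T$-free graph with $e(G)\ge \max\{n_0n_1+\lfloor (k-1)n_0/2\rfloor\}$, which is about $n^2/4+(k-1)n/8$.

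First, since $\widehat T$ is a subgraph of a complete tripartite graph $K_{1,t,t}$ with $t=O(k)$, the graph $G$ is $K_{1,t,t}$-free. The Erd\H{o}s--Stone--Simonovits stability theorem, followed by a cleaning step, then gives a near-bipartite structure. Concretely, I would delete low-degree vertices (degree below $(1/2-\varepsilon)n$) one at a time, checking that each deletion increases the edge surplus, so the deletions must stop after $o(n)$ steps; this is where $n\ge 4(4k)^6$ enters. What remains is a partition $V_0\cup V_1$ in which every vertex has only $O(\sqrt{\varepsilon}\,n)$ neighbours on its own side and almost all of the other side as neighbours.

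Second, I would show that $G[V_i]$ contains no copy of $T'$ for a suitable subtree $T'$ of $T$. The idea is this: if $G[V_0]$ contained a subtree $S$ on roughly half of $T$, then because vertices of $V_0$ have almost complete neighbourhoods in $V_1$, the common neighbourhood in $V_1$ of $S$ is huge. I can then embed the suspension apex in $V_1$ and complete the remaining part of $T$ using cross edges. The balancedness of $T$ is exactly what makes this work: the two colour classes of $T$, of sizes $\lceil |T|/2\rceil$ and $\lfloor |T|/2\rfloor$, can be split between the two sides, and extra edges inside $V_0$ or $V_1$ allow the apex plus one class to sit on one side. The finer claim is that the part of $T$ that must be embedded inside one side is a subtree whose number of vertices $q$ forces $\Delta(G[V_i])$ and the edge density of $G[V_i]$ to be bounded. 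The hypothesis that the Erd\H{o}s--S\'os conjecture holds for the nontrivial subtrees of $T$ then gives $e(G[V_0])\le \frac{k-1}{2}|V_0|$, or the analogous bound on $V_1$. I would also need to show that essentially only one side can carry inner edges, or that both sides together behave like one side with parameter $k-1$. For this I use the fact that an edge in $V_1$ together with a forest in $V_0$ already yields a copy of $\widehat T$.

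Third, the count $e(G)\le |V_0||V_1|+\lfloor (k-1)|V_0|/2\rfloor$ gives the bound after optimizing over $n_0$.

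I expect the main obstacle to be the second step: pinning down precisely which subtrees force which bounds. In particular I would need to rule out mixed configurations with edges on both sides, each below the Erd\H{o}s--S\'os threshold, whose total could exceed $\frac{k-1}{2}n_0$. My first attempt would be a degree argument: a vertex of degree at least $k$ inside its own class, combined with the near-completeness across the partition, greedily embeds $\widehat T$, so $\Delta(G[V_i])\le k-1$. This degree bound alone already gives the weaker bound $\frac{k-1}{2}(n_0+n_1)$. The real work is to show that the optimal configuration puts all inner edges on the one side that maximizes $n_0n_1+(k-1)n_0/2$. For that I would use the common neighbourhoods of edges on both sides to embed the two colour classes of $T$ simultaneously.

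\emph{Equality cases.} For these I would construct explicit graphs. Take $K_{n_0,n_1}$ with $n_0=\lfloor (2n+k)/4\rfloor$ or a neighbouring value, and place inside $V_0$ a graph $H$ that is $(k-1)$-regular or nearly so and contains no copy of the relevant forest.
\begin{enumerate}
\item When $2(k-1)\mid n_0$, let $H$ be a disjoint union of copies of $K_{k-1,k-1}$.
\item When $|T|=2k+1$ and $2\mid n_0$, let $H$ be a disjoint union of copies of $K_k$, or take $H$ $(k-1)$-regular with all components on at most $k$ vertices.
\item When $T$ contains a matching of size $k$, let $H$ be any $(k-1)$-regular graph, or one with $\lfloor (k-1)n_0/2\rfloor$ edges; by the previous step the obstruction is then just $\Delta\ge k$.
\end{enumerate}
In each case the check is that $\widehat T$ cannot embed. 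The apex must lie on one side, and the remainder would then require a large matching or a large component inside $H$, which is impossible because of the bounded components or bounded degree of $H$.
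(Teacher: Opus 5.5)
The paper quotes this theorem from Zhu, Wang, Zhang and Zhang without proof. Still, the lemmas it imports (Lemmas~\ref{decomp sus tree}, \ref{tree-packing}, \ref{splitting}) and its spectral analogue, Claim~\ref{sum degree}, show the mechanism that your upper-bound argument is missing.

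\emph{Upper bound.} Your first steps are fine: stability, cleaning, and $\Delta(G[V_i])\le k-1$ via a $K_{k,k+1}$ in a common neighbourhood. But they only give about $\frac{k-1}{2}n$ inner edges, while the target is about $\frac{k-1}{4}n$. (The bound is $n^2/4+(k-1)n/4+O(k^2)$, not $n^2/4+(k-1)n/8$.)

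Your embedding picture for the decisive step is off. Suppose the apex $p\in V_i$ has $a\le k-1$ neighbours inside $V_i$. Then only an independent set of $T$ of size at most $a$ can be placed there. All the rest of $T$ is a forest, not ``a subtree on roughly half of $T$'', and it must be built from inner edges of $V_{1-i}$ inside the common neighbourhood of $p$ and those $a$ vertices. Two tools are needed:
\begin{itemize}
\item Lemma~\ref{splitting}: some independent set $I_a$ with $|I_a|\le a$ leaves components of order at most $k-a+1$.
\item Lemma~\ref{tree-packing}: $\frac{k-a-1}{2}m+3k^3$ edges on $m$ vertices, with maximum degree below $k$, force vertex-disjoint copies of all these components at once. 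Erd\H{o}s--S\'os applied to a single subtree is not enough.
\end{itemize}
With $a=d_0$ this gives $e(G[V_1])<\frac{k-d_0-1}{2}|V_1|+O(k^3)$, and symmetrically for $V_0$, so $d_0+d_1\ge k$ costs about $|V_1|/2$ edges. Using only a single inner edge ($a=1$, your ``edge in $V_1$ plus a forest in $V_0$'') yields $\frac{k-2}{2}(|V_0|+|V_1|)$, which is not below the target for $k\ge 3$. Note that the subtree hypothesis enters here. It is not what gives $e(G[V_0])\le\frac{k-1}{2}|V_0|$; that is just the degree bound.

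Moreover, your intended conclusion that only one side carries inner edges is false. Take $k=3$, $T=P_6$ and $4\mid n$. Then $K_{n/2,n/2}$ with a perfect matching inside each part is $\widehat{P_6}$-free, because every neighbourhood induces a windmill, whose longest path has $5$ vertices. It has $n^2/4+n/2$ edges, which is exactly the bound. So after establishing $d_0+d_1\le k-1$ you still need two more things:
\begin{itemize}
\item an exact comparison of $|V_0||V_1|+\lfloor d_0|V_0|/2\rfloor+\lfloor d_1|V_1|/2\rfloor$ with $\max_{n_0}\{n_0n_1+\lfloor (k-1)n_0/2\rfloor\}$;
\item bookkeeping for missing cross edges and deleted vertices.
\end{itemize}

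\emph{Equality cases.} Your construction for (1) is the right one, but those for (2) and (3) fail.

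In (2), disjoint copies of $K_k$ need $k\mid n_0$ rather than $2\mid n_0$. They also contain triangles when $k\ge3$, which can be fatal. Consider the double star on $2k+1$ vertices whose adjacent centres carry $k-1$ and $k$ leaves; it is balanced and has diameter $3$. Mapping the apex and the two centres onto a triangle in $V_0$, and all leaves into $V_1$, yields $\wh T$. Use instead disjoint copies of $K_{k-1,k-1}$ and of $K_{k,k}$ minus a perfect matching. These are triangle-free and $(k-1)$-regular, with components of order less than $2k+1$, hence $\cM(\wh T)$-free by Lemma~\ref{decomp sus tree}. Since $\gcd(k-1,k)=1$, they tile every large even $n_0$.

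In (3), ``any $(k-1)$-regular graph'' fails because $T$ itself is an obstruction: put $T$ in $V_0$ and the apex in $V_1$. A $(k-1)$-regular graph of girth greater than $|T|$ contains every tree of maximum degree at most $k-1$, e.g.\ $P_{2k}$ for $k\ge3$. K\"onig's theorem only disposes of $\widehat{\cS(T)}$. You must also keep all components of order less than $|T|$, as the paper does with disjoint copies of $K_k$ in the proof of Theorem~\ref{disjoint}. Then you still have to check that such components realise exactly $\lfloor (k-1)n_0/2\rfloor$ edges for every $n_0$.
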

In view of the results of Yuan \cite{Y21} and Zhang \cite{Z} for $\widehat{P}_\ell$, it is natural to ask whether the same disjointness phenomenon occurs for suspensions of balanced trees.
\begin{question} 
For which integers $k$ and which non-path balanced trees $T$ on $2k$ or $2k+1$ vertices are there infinitely many integers $n$ such that 
$\EX(n,\widehat{T})\cap \SPEX(n,\widehat{T})=\emptyset$? Does this occur for infinitely many values of $k$?
\end{question}
Motivated by this question, we study the spectral Tur\'an problem for the suspension of a balanced tree on $2k$ or $2k+1$ vertices.  We also answer the question affirmatively in \Cref{disjoint}.  To state our results, we first introduce the following notation.
We define
\[
\phi(n,k)=\frac{k-1+\sqrt{(k-1)^2+4\lfloor n/2\rfloor\lceil n/2\rceil}}{2},
\qquad
\psi(n,k)=\frac{k-1+\sqrt{(k-1)^2+n^2-4}}{2}.
\]
For $n\equiv2\pmod4$, let $\eta(n)$ be the largest root of $x^3-x^2-\frac{n^2}{4}x+\frac{n}{2}=0$.

To describe the extremal families, we need the decomposition family introduced by Simonovits~\cite{S}. For a graph $H$ with $\chi(H)=r+1$, the \emph{decomposition family} $\mathcal{M}(H)$ consists of the minimal graphs $M$ such that embedding a copy of $M$ into one partite class of the Tur\'an graph $T_r(n)$ with $n$ large enough forces a copy of $H$ (the precise definition is given in \Cref{decomp}). Here $T_r(n)$ is the complete $r$-partite graph on $n$ vertices with partite sizes as equal as possible.

If $n\not\equiv 2 \pmod{4}$ or $k$ is odd, then for $a\in\{\floor{n/2},\ceil{n/2}\}$, let $\mathcal{G}_{a,k-1}$ be the family of graphs obtained from the complete bipartite graph with partite classes of sizes $a$ and $n-a$ by embedding a $(k-1)$-regular $\mathcal{M}(\widehat{T})$-free graph into the partite class of size $a$.  For even $k\ge4$ and $n\equiv2\pmod4$, let $\mathcal{H}_{n,k-1}$ be the family of graphs obtained from
$K_{n/2-1,\,n/2+1}$ by embedding a $(k-1)$-regular $\mathcal{M}(\widehat{T})$-free graph into one of its partite classes.  Finally, when $n\equiv2\pmod4$ and $k=2$, let $\mathcal{J}_{n,2}$ be the family of graphs obtained from $K_{n/2,\,n/2}$ by embedding a maximum matching into one of its partite classes.
Let
\[
\tau(n,k) =
\begin{cases}
\eta(n) & n\equiv 2 \pmod{4} \text{ and } k=2,\\
\psi(n,k) &  n\equiv 2 \pmod{4}\text{ and } \text{even } k\ge 4,\\
\phi(n,k) & \text{otherwise}.
\end{cases}
\]
Similarly, let
\[
\mathcal{K}(n,k)=
\begin{cases}
\mathcal{J}_{n,2} & n\equiv 2 \pmod{4} \text{ and } k=2,\\
\mathcal{H}_{n,k-1} &  n\equiv 2 \pmod{4}\text{ and } \text{even } k\ge 4,\\
\mathcal{G}_{\floor{n/2},k-1}\cup\mathcal{G}_{\ceil{n/2},k-1} & \text{otherwise}.
\end{cases}
\]
The following is one of the main theorems.
\begin{theorem}\label{main}
Let $T$ be a balanced tree on $2k$ or $2k+1$ vertices, and assume that the
Erd\H{o}s--S\'os conjecture holds for every nontrivial subtree of $T$.  Then
\[
\spex(n,\widehat{T})\le \tau(n,k)
\]
for sufficiently large $n$. Equality holds only if  $\SPEX(n,\widehat{T})\subseteq \mathcal{K}(n,k)$.
Moreover, 
      \begin{enumerate}
      \item For $k=2$,  there are infinitely many
$n\equiv2\pmod4$ for which the bound $\eta(n)$ is attained.

\item   For every fixed even $k\ge4$, there are infinitely many $n\equiv2\pmod4$ for which the
bound $\psi(n,k)$ is attained.
          \item For every fixed $k\ge2$, there are infinitely many
$n\not\equiv2\pmod4$ for which the bound $\phi(n,k)$ is attained.

      \end{enumerate}                   
\end{theorem}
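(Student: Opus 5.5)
We follow the standard stability-plus-refinement route for spectral Tur\'an problems of graphs with chromatic number three, as in \cite{CFTZ20,CDT22,Z}. Let $G\in\SPEX(n,\widehat T)$ and let $\mathbf x$ be the Perron vector normalized so that $\max_v x_v=1$.

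\textbf{Step 1: stability.} Since $\chi(\widehat T)=3$ and $K_{\lfloor n/2\rfloor,\lceil n/2\rceil}$ is $\widehat T$-free, we have $\rho(G)\ge \sqrt{\lfloor n^2/4\rfloor}$. Nikiforov's spectral stability theorem, together with the spectral Erd\H{o}s--Stone bound, then gives $e(G)\ge n^2/4-o(n^2)$ and a partition $V=V_0\cup V_1$ with $e(V_0)+e(V_1)=o(n^2)$ and $|V_i|=n/2+o(n)$.

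\textbf{Step 2: cleaning the structure.} We remove a small exceptional set $L$ of vertices of low degree or small Perron weight; the eigen-equation $\rho x_v=\sum_{u\sim v}x_u$ shows $|L|=O(1)$. The key local fact, where the Erd\H{o}s--S\'os hypothesis enters, is the following: if a vertex $v\in V_0$ had, inside $V_0$, a neighbourhood containing the appropriate subtree, then $v$ together with its many common neighbours in $V_1$ would embed $\widehat T$. Here $T$ is balanced, so one colour class is embedded inside $V_0$ and the other is mapped to common neighbours in $V_1$, with the apex at $v$. This yields that each part $G[V_i]$ is $\mathcal M(\widehat T)$-free and has bounded degree. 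More precisely, the Erd\H{o}s--S\'os conjecture applied to the subtrees gives that the sum of the degrees of $G[V_0]$ and $G[V_1]$ at the relevant scale is at most $k-1$, in the sense that one side has maximum degree at most $k-1$ and the other has bounded edges. A further eigenvector comparison then shows $L=\emptyset$ and that $G[V_0,V_1]$ is complete bipartite, because adding a cross edge keeps the graph $\widehat T$-free and strictly increases $\rho$. The result is $G=K_{n_0,n_1}$ plus graphs $H_i\subseteq G[V_i]$ with all edges essentially in one part, $H_0$ say, of maximum degree at most $k-1$.

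\textbf{Step 3: optimisation.} For $G=K_{n_0,n_1}+H_0$ with $\Delta(H_0)\le k-1$, the eigen-equations give $\rho\le$ the largest root of $\rho^2-(k-1)\rho-n_0n_1=0$. Equality holds iff $H_0$ is $(k-1)$-regular, which requires $(k-1)n_0$ to be even. The resulting value $\frac{k-1+\sqrt{(k-1)^2+4n_0n_1}}2$ is maximised at $n_0n_1=\lfloor n^2/4\rfloor$, giving $\phi(n,k)$. The parity obstruction occurs exactly when $k$ is even and $n\equiv2\pmod4$, since then $n_0=n/2$ is odd and no $(k-1)$-regular graph on $n_0$ vertices exists. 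In that case we compare two options. The first is $n_0=n/2\pm1$ (even) with a regular graph, giving $n_0n_1=n^2/4-1$ and hence $\psi(n,k)$. The second is $n_0=n/2$ with an almost regular $H_0$ having one vertex of degree $k-2$. For the second option, a quotient-matrix/perturbation estimate shows its $\rho$ falls below the regular value by about $\Theta(1/n)$ when $k\ge4$, while $\psi$ loses only about $1/n$ as well. The comparison has to be done carefully, and we expect it to favour $\psi$ for $k\ge4$. For $k=2$ the graph $H_0$ is a matching, and the equitable partition (matched vertices, the single unmatched vertex, $V_1$) yields the cubic $x^3-x^2-\frac{n^2}4x+\frac n2=0$, whose root $\eta(n)$ exceeds $\psi(n,2)$. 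This delicate $O(1/n)$ comparison is the main obstacle.

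\textbf{Step 4: attainment.} For the three items we exhibit infinitely many $n$ for which a $(k-1)$-regular $\mathcal M(\widehat T)$-free graph, or a maximum matching, exists on the required part size. Following the arithmetic conditions in \Cref{classical}, we take disjoint unions of $K_{k-1,k-1}$ or suitable Cayley graphs of large girth, checking that these avoid $\mathcal M(\widehat T)$. We choose $n$ in residue classes such that $2(k-1)$ divides the part size.
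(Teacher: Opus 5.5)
There are two genuine gaps, and they sit exactly where the proof has to do real work.

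First, Step 2 does not establish the structural fact the upper bound needs, namely $d_0+d_1\le k-1$ where $d_i=\Delta(G[V_i])$. The local embedding you describe (a vertex with $k$ inside neighbours, whose common neighbourhood on the other side contains $K_{k,k+1}$) only gives $d_i\le k-1$ for each $i$ separately. Your reformulation ``one side has maximum degree at most $k-1$ and the other has bounded edges'' neither follows from it nor suffices. Take $d_0=k-1$ and a single edge inside $V_1$. Tait's join bound is then $\frac{k+\sqrt{(k-2)^2+4n_0n_1}}{2}>\phi(n,k)$. Such a configuration need not contain $\widehat T$ when $G[V_0]$ is sparse, so it has to be excluded by a global argument.

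The paper does this with a trade-off. Assume $d_0+d_1\ge k$. Take $y\in V_0$ with at least $k-d_1$ inside neighbours and $z\in V_1$ with $d_1$ inside neighbours. Split $T$ by an independent set of size at most $k-d_1$ (resp.\ $d_1$) into pieces of order at most $d_1+1$ (resp.\ $k-d_1+1$). Applying the Erd\H{o}s--S\'os-based tree-packing lemma inside the common neighbourhoods gives $e(G[V_1])<\frac{(d_1-1)|V_1|}{2}+5k^3$ and $e(G[V_0])<\frac{(k-d_1-1)|V_0|}{2}+5k^3$. So $G$ has $\Omega(n)$ fewer edges than $K_{|V_0|,|V_1|}$ with disjoint copies of $K_{k-1,k-1}$ placed in $V_0$. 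Since \emph{every} Perron entry exceeds $1-\theta$ (which your Nikiforov-based Step 1 would also have to deliver), the Rayleigh quotient makes this competitor spectrally larger.

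With $d_0+d_1\le k-1$ in hand, the paper bounds $\rho(G)\le\rho(G[V_0]\vee G[V_1])$ by Tait's lemma. It therefore never needs the cross graph to be complete or all inside edges to lie in one part. Your claim that adding any cross edge preserves $\widehat T$-freeness is unsupported. It is false for joins satisfying only the degree conditions, even when both sides are $\mathcal{M}(\widehat T)$-free: for $T=P_{2k}$ with $k\ge4$, one edge in $V_0$ and a path on $2k-1$ vertices in $V_1$ already give $\widehat T$ in the join.

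Second, in the case $k\ge4$ even, $n\equiv2\pmod4$, you leave the decisive comparison as an expectation and test only one almost-regular $H_0$. The candidate values differ by $O(1/n)$; for instance $\phi(n,k)-\psi(n,k)=\Theta(1/n)$. So the comparison is decided by the constant term in $\rho^2-(k-1)\rho-n^2/4$: $\psi$ corresponds to $-1$, and you must show the balanced case gives strictly less. An asymptotic perturbation estimate will not do this. You need an exact inequality covering every $H$ on $n/2$ vertices with $\Delta(H)\le k-1$, and also configurations with edges on both sides.

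The paper handles both-sided configurations by parity. Since $d_0+d_1=k-1$ is odd, $|d_0-d_1|$ is odd, and unless it equals $k-1$ it is at most $k-3$. That puts Tait's bound strictly below $\psi(n,k)$ for $k\ge4$.

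For the balanced single-sided case it uses the exact identity $\lambda^2-(k-1)\lambda-(n/2)^2=-\lambda\sum_v r_vz_v/\sum_v z_v$, where $r_v=k-1-d_H(v)$ and $\sum_v r_v\ge1$ by parity. It then shows this ratio exceeds $1$, using the eigen-equation at a vertex of minimum Perron entry together with $\lambda<n/2+k-2$.

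Likewise, for $k=2$ it is not enough to assert $\eta(n)>\psi(n,2)$. You must show that every $F_{a,b}$ with $a\ne n/2$ stays below $\eta(n)$, including odd $a$, where the cubic $x^3-x^2-abx+b$ arises. The paper does this by evaluating that cubic at $\eta(n)$ and using that $n/2-b$ is a nonzero even integer.
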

We conclude the introduction with our disjointness result.
\begin{theorem}\label{disjoint}
The following hold.
\begin{enumerate}
	\item For infinitely many integers $k$, every balanced tree $T$ on $2k+1$ vertices that is not a path and satisfies the Erd\H{o}s--S\'os conjecture for every nontrivial subtree of $T$ has infinitely many integers $n$ such that $\EX(n,\widehat{T})\cap\SPEX(n,\widehat{T})=\emptyset$.
	\item For infinitely many integers $k$, every balanced tree $T$ on $2k$ vertices that is not a path, contains a matching of size $k$, and satisfies the Erd\H{o}s--S\'os conjecture for every nontrivial subtree of $T$ has infinitely many integers $n$ such that $\EX(n,\widehat{T})\cap\SPEX(n,\widehat{T})=\emptyset$.
\end{enumerate}
\end{theorem}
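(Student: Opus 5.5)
The plan is to compare, for suitable $n$, the structure forced by \Cref{main} on spectral extremal graphs with the structure of edge-extremal graphs given by \Cref{classical}. The natural regime is $n\equiv 2\pmod 4$ with $k$ even and $k\ge 4$. There \Cref{main} (part 2) gives $\SPEX(n,\widehat T)\subseteq \mathcal{H}_{n,k-1}$, and $\spex(n,\widehat T)=\psi(n,k)$ for infinitely many such $n$. So every spectral extremal graph is $K_{n/2-1,n/2+1}$ with a $(k-1)$-regular $\mathcal{M}(\widehat T)$-free graph embedded in one class. Its edge count is
\[
\Big(\frac n2-1\Big)\Big(\frac n2+1\Big)+\frac{(k-1)a}{2},\qquad a\in\Big\{\frac n2\pm1\Big\}.
\]
This is at most $\frac{n^2}{4}-1+\frac{(k-1)(n/2+1)}{2}$.

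I then show that $\ex(n,\widehat T)$ is strictly larger for these $n$. Using \Cref{classical}, I pick $n$ so that equality holds in the upper bound. For the $2k$-vertex case this is automatic under the matching hypothesis (item 3). For the $2k+1$-vertex case I use item 1 or 2 and restrict $n$ to an arithmetic progression satisfying both $n\equiv 2\pmod 4$ and $2\mid\lfloor(2n+k)/4\rfloor$. Writing $n=4m+2$ with $k\equiv 0\pmod 4$, we get $\lfloor(2n+k)/4\rfloor=2m+1+k/4$, which is even when $m\equiv k/4+1\pmod 2$. This is a congruence on $n$ modulo $8$, compatible with the infinitely many $n$ from \Cref{main}(2) provided those $n$ can be chosen in every residue class. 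That proviso is the reason to restrict to infinitely many $k$, say $k\equiv 0\pmod 4$ or $k$ a power of $2$ chosen so that the needed progressions overlap.

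For such $n$, I evaluate $\max_{n_0+n_1=n}\{n_0n_1+\lfloor (k-1)n_0/2\rfloor\}$. Taking $n_0=n/2$ gives $\frac{n^2}{4}+\lfloor (k-1)n/4\rfloor$. Since $n/2$ is odd and $k-1$ is odd, $(k-1)n/4$ is a half-integer, so this value is $\frac{n^2}{4}+\frac{(k-1)n}{4}-\frac12$. The spectral graphs have at most $\frac{n^2}{4}-1+\frac{(k-1)n}{4}+\frac{k-1}{2}$ edges, so this comparison alone is not enough. I would optimise $n_0$ over $n/2+j$ for small $j$ and compare exactly. The extremal value is roughly $\frac{n^2}{4}+\frac{(k-1)n}{4}+\frac{(k-1)^2}{16}$, attained near $n_0=n/2+(k-1)/4$. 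The quantity $n_0n_1$ drops only by $j^2$, and for $k\ge 6$ this beats the spectral graphs' count by a positive amount of order $(k-1)^2/16-(k-1)/2+1$.

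When that margin is not positive for small $k$, I restrict to large $k$ (again infinitely many $k$). This gives $e(G)<\ex(n,\widehat T)$ for every $G\in\SPEX(n,\widehat T)$, and hence disjointness. The non-path hypothesis ensures we are outside the known kipas case and that \Cref{main} applies.

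The main obstacle is the arithmetic compatibility of the two infinite sets of $n$. \Cref{main}(2) only guarantees infinitely many $n\equiv 2\pmod 4$ with attainment, not all residues mod $8$. I would first try to extract from its proof that attainment holds whenever a $(k-1)$-regular $\mathcal{M}(\widehat T)$-free graph exists on $n/2\pm1$ vertices. For $k$ even this holds for all large $n$, because $(k-1)$-regular graphs need an even number of vertices and $n/2\pm1$ is even. With that in hand, every large $n\equiv2\pmod4$ works, and the congruence restriction causes no problem.
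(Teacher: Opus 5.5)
\textbf{Verdict: there is a genuine gap.} Your approach can be repaired, but the step that supplies the lower bound on $\ex$ in part 1 is wrong, and so is the threshold on $k$.

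Your overall strategy matches the paper's: use \Cref{main} to pin down $\SPEX(n,\widehat T)$, then use the equality cases of \Cref{classical} to show $\ex(n,\widehat T)$ exceeds every edge count in that family. The difference is the regime. You work in the $\psi$-regime ($n\equiv 2\pmod 4$, $k$ even). The paper works with $n\equiv 0\pmod 4$, taking $k=8s+1$ in part 1 and $k=8s$ in part 2. There every spectral extremal graph lies in $\mathcal{G}_{n/2,k-1}$ and has exactly $n^2/4+(k-1)n/4$ edges, and $n_0=n/2+2s$ beats this.

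\textbf{The parity error.} For $n=4m+2$ and $4\mid k$ you correctly get $\lfloor (2n+k)/4\rfloor=2m+1+k/4$. Its parity, however, is that of $1+k/4$ and does not depend on $m$, so there is no congruence on $n$ to impose. Consequently:
\begin{itemize}
\item when $k\equiv 4\pmod 8$, item 2 of \Cref{classical} holds for every $n\equiv 2\pmod 4$;
\item when $k\equiv 0\pmod 8$, item 2 holds for no such $n$. In that case $\lfloor(2n+k)/4\rfloor$ is odd, so item 1 (divisibility by the even number $2(k-1)$) fails too.
\end{itemize}
In particular, your suggested choice ``$k$ a power of $2$'' gives no lower bound on $\ex(n,\widehat T)$ at all, and part 1 collapses.

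The obstacle you single out is an artifact of this error. With $k\equiv 4\pmod 8$ you can simply use the sequence $n=4(k-1)L+2$ from the proof of \Cref{main}(2). There $\psi(n,k)$ is attained by $L$ copies of $K_{k-1,k-1}$ embedded in the class of size $n/2-1$. You never need attainment for every $n\equiv 2\pmod 4$. Your argument for that was incomplete anyway: parity only gives a $(k-1)$-regular graph on $n/2\pm1$ vertices, not an $\mathcal{M}(\widehat T)$-free one.

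\textbf{The margin.} This must be computed exactly, not from the continuous optimum. Put $f(j):=(\frac n2+j)(\frac n2-j)+\lfloor (k-1)(\frac n2+j)/2\rfloor$.
\begin{itemize}
\item Every graph in $\mathcal{H}_{n,k-1}$ has at most $f(1)$ edges.
\item Since $n/2$ and $k-1$ are both odd, $f(j)-f(1)=(j-1)\bigl(\frac{k-1}{2}-j-1\bigr)$ for odd $j$, with an extra $-\frac12$ for even $j$.
\item Hence $f(2)-f(1)=\frac{k-8}{2}$ and $f(3)-f(1)=k-9$.
\item For $k\in\{4,6,8\}$ the maximum over $j$ is $f(1)$, so there is no edge surplus and the comparison cannot give disjointness.
\end{itemize}
Your claim of a positive surplus for $k\ge 6$ is therefore false; the correct threshold is $k\ge 10$.

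\textbf{With these corrections the argument goes through.} Take $k\equiv 4\pmod 8$ with $k\ge 12$ in part 1 (using item 2), and even $k\ge 10$ in part 2 (using item 3). Then $\ex(n,\widehat T)\ge f(3)=f(1)+k-9$. This is a valid alternative to the paper's proof, slightly less clean because $\mathcal{H}_{n,k-1}$ contains graphs with two different edge counts.

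Finally, the non-path hypothesis plays no role in applying \Cref{main}, since paths are balanced trees too; it only excludes the already known kipas case.
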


Since the proofs of our main results rely on several independent techniques, we present them in a self‑contained manner.  The rest of the paper is organized as follows.  In Section~2, we collect the necessary definitions and auxiliary lemmas.  Section~3 contains the complete proofs of Theorems~\ref{main} and \ref{disjoint}. 

\section{Preliminaries}
We begin by collecting the combinatorial tools that will allow us to bound the number of edges inside each part of the partition obtained from the stability lemma.
        
Given two graphs $L$ and $R$, the \emph{join} $L\vee R$ is the graph obtained from the vertex-disjoint union of $L$ and $R$ by connecting each vertex of $L$ and each vertex of $R$.
		
\begin{definition}[Simonovits \cite{S}]\label{decomp}
Given a graph $H$ with $\chi(H) = r+1$, the \emph{decomposition family} $\mathcal{M}(H)$ is the set of minimal graphs $M$ such that $H \subset (M \cup \overline{K_t}) \vee T_{r-1}\left((r-1)t\right)$, where $t = t(H)$ is a constant.
\end{definition}

The next observation gives a simple way to guarantee that certain constructions are $\widehat T$-free.
\begin{proposition}\label{prop}
If a graph $F$ contains no member of $\cM(H)$ as a subgraph, then embedding $F$ into one partite class of any complete $r$-partite  graph on $n$ vertices yields an $H$-free graph, where $\chi(H)=r+1$.
\end{proposition}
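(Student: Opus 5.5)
We argue by contradiction: we suppose that the graph $G$ obtained by embedding $F$ into one partite class, say $V_1$, of a complete $r$-partite graph $K$ on $n$ vertices with classes $V_1,\dots,V_r$ contains a copy of $H$. From this copy we extract a member of $\mathcal{M}(H)$ inside $F$. The only structural input is \Cref{decomp}: a graph $M$ belongs to $\mathcal{M}(H)$ precisely when it is minimal with $H\subset (M\cup\overline{K_t})\vee T_{r-1}((r-1)t)$, where $t=t(H)$.

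First, we fix an embedding of $H$ in $G$. Let $M'$ be the subgraph of $F$ induced by the image vertices lying in $V_1$, that is, $G[V(H)\cap V_1]$. Only vertices of $H$ matter, so we may shrink $V_2,\dots,V_r$ to the images of $H$ in them and discard the unused vertices of $V_1$. The copy of $H$ then lies in $(M'\cup \overline{K_s})\vee K_{a_2,\dots,a_r}$ with all $a_i\le |V(H)|$.

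Next we want $t\ge |V(H)|$, so that $K_{a_2,\dots,a_r}\subseteq T_{r-1}((r-1)t)$ and the isolated padding is harmless. We choose the constant $t=t(H)$ large, which the definition permits (this is the standard convention in Simonovits' definition). Then $H\subset (M'\cup\overline{K_t})\vee T_{r-1}((r-1)t)$. Isolated vertices of $M'$ can be absorbed into $\overline{K_t}$, so we may replace $M'$ by its non-isolated part.

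Among all subgraphs $M\subseteq M'$ with $H\subset (M\cup\overline{K_t})\vee T_{r-1}((r-1)t)$, we take a minimal one. The family is finite and nonempty, so a minimal element exists. It is minimal in the sense of \Cref{decomp}, since any proper subgraph of it with the property would also be a subgraph of $M'$. Hence $M\in\mathcal{M}(H)$ and $M\subseteq M'\subseteq F$, which contradicts the hypothesis.

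The main obstacle is technical: making sure that "minimal" in \Cref{decomp} is minimality under the subgraph relation among all graphs, and that $t$ may be taken at least $|V(H)|$. If instead $t$ is fixed but $\mathcal{M}(H)$ is known to be independent of $t$ for large $t$, we would invoke that and argue as above.
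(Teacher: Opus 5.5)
Your argument is correct and is exactly the unpacking of \Cref{decomp} that the paper leaves implicit, since the paper states this proposition as an observation without proof. You restrict the copy of $H$ to its image, note that the part inside the embedded class is a subgraph $M'\subseteq F$ with $H\subseteq M'\vee K_{a_2,\dots,a_r}$, and extract a minimal $M\subseteq M'$. The only real subtlety is the one you flagged: $t=t(H)$ must be at least $|V(H)|$ so that $K_{a_2,\dots,a_r}\subseteq T_{r-1}((r-1)t)$, because for very small $t$ the family $\cM(H)$ changes and the statement can fail. Your appeal to the standard convention that $t$ is a sufficiently large constant settles this.
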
	

To apply this observation, we need the decomposition family of $\widehat T$, which was determined by Zhu, Wang, Zhang, and Zhang \cite{Zhu}.
		
\begin{lemma}[Zhu, Wang, Zhang, and Zhang \cite{Zhu}]\label{decomp sus tree}
Let $T$ be a balanced tree whose smaller color class has size $k$.  A set $S\subseteq V(T)$ is called a covering if $V(T)\setminus S$ is independent.
Define
\[
\cS(T):=\{T[S]: S\subseteq V(T)\text{ is a covering and }|S|\le k\} \quad\text{and}\quad \widehat{\cS(T)}=\{\wh{T[S]}:T[S]\in \cS(T)\}.
\]
Then $\cM(\wh T) \subseteq \{T\}\cup \widehat{\cS(T)}$.  Moreover, the star $K_{1,k}$ is in $\widehat{\cS(T)}$ and every other member of  $\widehat{\cS(T)}$
contains a triangle. 
\end{lemma}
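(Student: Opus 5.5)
The plan is to unwind \Cref{decomp} for $H=\wh T$ and look at where the apex of $\wh T$ lands. Since $T$ is bipartite and $\wh T$ contains a triangle, we have $\chi(\wh T)=3$, so $r=2$. Here $T_{r-1}((r-1)t)=T_1(t)$ is just an independent set $B$ of size $t$. Hence $M\in\cM(\wh T)$ means that $\wh T$ embeds into $(M\cup\overline{K_t})\vee \overline{K_t}$, and that $M$ is minimal with this property. Write $A$ for the side carrying $M\cup\overline{K_t}$, so every edge inside $A$ is an edge of $M$. Fix such an embedding of $\wh T$ and let $v$ be the image of its apex.

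\emph{Case $v\in B$.} Every vertex of $T$ is adjacent to $v$ and $B$ is independent, so all of $T$ lies in $A$. All edges of $T$ are then edges inside $A$, hence edges of $M$, so $T\subseteq M$. By minimality of $M$, and since $T$ alone already forces $\wh T$ (put the apex in $B$), we get $M\cong T$ after discarding isolated vertices.

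\emph{Case $v\in A$.} Let $I$ be the set of vertices of $T$ mapped into $B$; it is independent in $T$. Then $S:=V(T)\setminus I$ is a covering, and the vertices of $S$ together with $v$ lie in $A$. The edges of $T[S]$ and the edges from $v$ to $S$ all lie inside $A$, so $M\supseteq \wh{T[S]}$. Conversely, $\wh{T[S]}$ placed in $A$ forces $\wh T$ by sending $I$ into $B$. Minimality therefore gives $M\cong\wh{T[S]}$.

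It remains to rule out $|S|>k$. In that case the apex of $\wh{T[S]}$ has more than $k$ neighbours, so $\wh{T[S]}\supseteq K_{1,k}$. Now let $X$ be the smaller colour class, with $|X|=k$. Then $V(T)\setminus X$ is independent, so $X$ is a covering with $|X|\le k$, and $\wh{T[X]}=K_{1,k}$ because $X$ is independent. This star already forces $\wh T$, so $\wh{T[S]}$ is not minimal, a contradiction. Hence $|S|\le k$ and $M\in\widehat{\cS(T)}$. Together with the first case this gives $\cM(\wh T)\subseteq\{T\}\cup\widehat{\cS(T)}$, and the computation for $X$ also shows $K_{1,k}\in\widehat{\cS(T)}$.

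For the triangle statement, take any $T[S]\in\cS(T)$. If $T[S]$ has an edge $xy$, then $x,y$ and the apex span a triangle in $\wh{T[S]}$. Otherwise $S$ is independent, and so is $V(T)\setminus S$. Thus $(S,V(T)\setminus S)$ is a proper 2-colouring of the connected graph $T$, so it is the unique bipartition. Then $|S|\le k$ forces $S$ to be a colour class of size $k$; for $2k$ vertices either class qualifies. In both situations $\wh{T[S]}=K_{1,k}$.

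The only delicate point is the minimality argument in the case $v\in A$: the bound $|S|\le k$ is not automatic for an arbitrary embedding. It has to come from comparing with the star $K_{1,k}$ that arises from the smaller colour class.
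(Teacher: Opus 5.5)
Your proof is correct. The paper gives no proof of this lemma; it is quoted from Zhu, Wang, Zhang and Zhang, so there is no in-paper argument to compare against. Your derivation works directly from Definition~\ref{decomp} with $r=2$, splitting on whether the apex of $\wh T$ lands in the independent side $B$ or in the side carrying $M$. In each case you use minimality to identify $M$ with $T$ or with $\wh{T[S]}$, and you use the star $K_{1,k}=\wh{T[X]}$ coming from the smaller colour class $X$ to force $|S|\le k$. That is a complete, self-contained argument, and the triangle dichotomy via uniqueness of the bipartition of the connected tree $T$ is also right.

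The only points left implicit are harmless:
\begin{itemize}
\item You need $t\ge |V(T)|$, so that $I$ fits into $B$ and isolated vertices of $M$ can be discarded.
\item You use that $T$ and $\wh{T[S]}$ have no isolated vertices, so their images lie in $M$ rather than in the $\overline{K_t}$ part of $A$.
\item You use $k\ge 1$, so that $T$ has an edge and $\chi(\wh T)=3$.
\end{itemize}
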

		
The next result converts an edge surplus beyond the Erdős--Sós bound into a collection of vertex‑disjoint copies of prescribed trees.
\begin{lemma}[Zhu, Wang, Zhang, and Zhang \cite{Zhu}]\label{tree-packing}
Let $K\ge k'\ge0$, and let $G$ be an $n$-vertex graph with $n\ge 7K^3$, $\Delta(G)\le K-1$, and $e(G)\ge \frac{k'n}{2}+3K^3$. Let $T_1,\dots,T_\omega$ be trees with $\omega\le 2K$ and $|T_i|\le k'+2$ for all $i$. If the Erd\H{o}s--S\'os bound $\ex(m,T_i)\le \frac{(|T_i|-2)m}{2}$ holds for every $m\in\mathbb{N}$ and for each $i$ with $|T_i|\ge 2$, then $G$ contains vertex-disjoint copies of $T_1,\dots,T_\omega$.
\end{lemma}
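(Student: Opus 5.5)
The statement is Lemma~\ref{tree-packing}: an edge surplus of $3K^3$ over the Erd\H{o}s--S\'os bound $\frac{k'n}{2}$ in a bounded-degree graph forces vertex-disjoint copies of $T_1,\dots,T_\omega$. I plan to embed the trees greedily, one at a time, deleting used vertices and controlling the edge loss.

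\textbf{Setup.} Suppose copies of $T_1,\dots,T_{j-1}$ have already been found, vertex-disjoint. Let $U$ be the set of vertices they use, so
\[
|U|\le (j-1)(k'+2)\le 2K(K+2).
\]
Since $\Delta(G)\le K-1$, deleting $U$ removes at most $|U|(K-1)\le 2K(K+2)(K-1)$ edges. Write $G_j=G-U$ and $m=|V(G_j)|\le n$. Then
\[
e(G_j)\ \ge\ \frac{k'n}{2}+3K^3-2K(K+2)(K-1)\ \ge\ \frac{k'm}{2}+3K^3-2K^3-2K^2+4K .
\]

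\textbf{Key inequality.} The remaining surplus $K^3-2K^2+4K$ is positive for every $K\ge1$ (its discriminant in $K$ is negative). Hence $e(G_j)>\frac{k'm}{2}$.

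\textbf{Embedding step.} Suppose $|T_j|\ge2$. Because $|T_j|\le k'+2$, we have
\[
\frac{(|T_j|-2)m}{2}\le \frac{k'm}{2}<e(G_j).
\]
The assumed Erd\H{o}s--S\'os bound for $T_j$ then gives $T_j\subseteq G_j$, disjoint from the earlier copies. If $|T_j|=1$, any vertex of $G_j$ will do; such a vertex exists since $m\ge n-2K(K+2)>0$, using $n\ge7K^3$.

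\textbf{Main obstacle.} The only delicate point is the bookkeeping in the Key inequality: the deleted edges must be absorbed by the $3K^3$ slack, uniformly over all $\omega\le 2K$ steps. The condition $n\ge7K^3$ matters only for nonemptiness (and possibly for $k'm/2$ versus $k'n/2$, which only helps here). The degenerate case $K=k'=0$ is vacuous, since $e(G)\ge0$ and $\Delta\le -1$ is impossible.
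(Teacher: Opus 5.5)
Your greedy argument is correct and complete. The vertices used so far number at most $(2K-1)(K+2)\le 2K(K+2)$. Deleting them costs at most $2K(K+2)(K-1)=2K^3+2K^2-4K$ edges, which the $3K^3$ surplus absorbs. Then $\frac{k'n}{2}\ge\frac{k'm}{2}$ and $|T_j|-2\le k'$ give $e(G_j)>\frac{(|T_j|-2)m}{2}$, so the assumed Erd\H{o}s--S\'os bound places $T_j$ in $G_j$.

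The paper gives no proof of this lemma; it is quoted from Zhu, Wang, Zhang, and Zhang, so there is no argument here to compare yours against. Your one-tree-at-a-time deletion is a self-contained proof. It also shows that the hypothesis $n\ge 7K^3$ is not needed: $e(G_j)\ge K^3-2K^2+4K>0$ already guarantees $G_j$ has vertices, including for trivial $T_j$.
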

		
In order to use the  lemma above, we decompose $T$ by removing a small independent set.
\begin{lemma} [Zhu, Wang, Zhang, and Zhang \cite{Zhu}]\label{splitting}
Let $T$ be a balanced tree on $2k$ or $2k+1$ vertices.  Then, for every $a\in\{1,\dots,k\}$, there exists an independent set $I_a\subseteq V(T)$ with $|I_a|\le a$ such that
\[
T-I_a=T_1\cup\cdots\cup T_{\omega},
\]
where $\omega\le 2k$ and $|T_i|\le k-a+1$ for every $i$.
\end{lemma}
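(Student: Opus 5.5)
The statement to prove is \Cref{splitting}: for every $a\in\{1,\dots,k\}$ we need an independent set $I_a$ with $|I_a|\le a$ such that every component of $T-I_a$ has at most $k-a+1$ vertices, and there are at most $2k$ components. I will build $I_a$ greedily, one vertex at a time, always cutting off a chunk of about one unit of "size budget" per removed vertex. The target is a more uniform statement: for every $a$, removing $a$ suitably chosen pairwise nonadjacent vertices leaves components of size at most $k-a+1$. The bound $\omega\le 2k$ is then automatic, because $T-I_a$ has at most $|T|-|I_a|\le 2k+1-1=2k$ vertices when $a\ge 1$, hence at most $2k$ components.

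The basic tool is the standard centroid-type argument for trees. Given a tree (or forest) and a threshold $s$, root $T$ and pick a deepest vertex $v$ whose subtree has more than $s$ vertices. Then every child subtree of $v$ has at most $s$ vertices. Deleting $v$ splits off these child subtrees (each of size at most $s$) and leaves the rest of the tree. The rest has at most $|T|-s-1$ vertices, since the subtree of $v$ had at least $s+1$ vertices.

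I would iterate this with $s=k-a+1$. At each step I take the component $C$ that is still too large and delete such a $v$, which removes at least $s+1$ vertices from $C$. After $j$ deletions the remaining "big part" has at most $|T|-j(s+1)$ vertices. With $|T|\le 2k+1$ and $s+1=k-a+2$, we need $2k+1-(a-1)(k-a+2)\le k-a+1$ after at most $a$ deletions. This becomes a quadratic inequality in $a$ that holds at the endpoints $a=1,k$ (for $a=1$ we need one deletion at a centroid, which gives components of size $\le k$) and in between whenever $a\ge 2$. I would check it cleanly as
\[
(a-1)(k-a+2)\ge k+a \quad\text{for } 2\le a\le k-1,
\]
noting $(a-1)(k-a+2)-k-a=(a-2)(k-a)+\dots$. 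Some slack may be needed via the better count from choosing deletions carefully. For $a=k$ the required size is 1, so $I_k$ must be a vertex cover's complement. Here I use balancedness directly: take $I_k$ to be the smaller colour class, which has at most $k$ vertices and leaves only isolated vertices.

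The main obstacle is independence of $I_a$: greedily chosen cut vertices may be adjacent. My first attempt is to use the bipartition. The balanced condition says each colour class has size about $k$, and I would restrict all cut vertices to one colour class. When the chosen $v$ has the wrong colour, I replace it by deleting instead its children (or its parent side neighbour) of the other colour, which costs more deletions. Alternatively, I would run the procedure so that consecutive cuts are in different components, since adjacent vertices can only arise when a later cut is a neighbour of an earlier one. A later cut adjacent to an earlier one can be shifted one step deeper, into a child, at the cost of a slightly smaller removed chunk. Interpolating between the $a=1$ centroid case and the $a=k$ colour-class case, while keeping independence, is where I expect the real work. If the greedy count is too tight, I would fall back to the colour-class cut for large $a$ and use the centroid iteration only for small $a$.
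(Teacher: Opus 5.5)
\paragraph{Verdict: the proposal has a genuine gap.} The paper does not prove this lemma; it is quoted from \cite{Zhu}. So your sketch has to stand on its own, and it does not: the independence of $I_a$ is never established. That is the only part of the statement that goes beyond the standard greedy tree-cutting argument.

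What you do establish is the easy part:
\begin{itemize}
\item $a=1$, via a centroid;
\item $a=k$, via the smaller colour class;
\item the bound $\omega\le 2k$;
\item the fact that greedy cutting with threshold $s=k-a+1$ uses at most $a$ cut vertices, which need not be independent.
\end{itemize}
Even the last point is miscounted. You have $(a-1)(k-a+2)-(k+a)=(a-2)(k-a)-2$, which is negative at $a=2$ for every $k$. The correct count is this: the $J$ greedy cuts remove pairwise disjoint sets of at least $k-a+2$ vertices each. Hence $J(k-a+2)\le 2k+1<(a+1)(k-a+2)$, because $(a+1)(k-a+2)-(2k+1)=(a-1)(k-a)+1$, and so $J\le a$.

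\paragraph{Why independence is not handled.} The greedy cut vertices need not be independent, since a later cut can be the parent of an earlier one. For example, take $k=3$, $a=2$, and the balanced tree rooted at $p$ with children $v,w,x$, where $v$ has two leaf children and $w$ has one. The greedy deletes $v$ and then $p$. None of your proposed repairs is carried out or shown to respect the budget $a$:
\begin{itemize}
\item \emph{One fixed colour class.} This cannot work for all $a$. In $P_{2k+1}$ with $k$ even, the case $a=1$ forces the middle vertex, which lies in the larger class, while $a=k$ forces the smaller class. So the class would have to depend on $a$ and $T$, and you give no rule for intermediate $a$.
\item \emph{Replacing a wrong-coloured cut $v$.} Replacing $v$ by its children may cost $\deg(v)$ deletions. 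Replacing it by its parent leaves the subtree of $v$, with at least $k-a+2$ vertices, as a component.
\item \emph{Shifting a conflicting cut from $p$ to another child $w$.} This cuts off only the subtree of $w$, which has at most $k-a+1$ vertices and possibly just one, and it leaves $p$ attached to the rest of the tree. That destroys the invariant that each cut accounts for at least $k-a+2$ vertices, which is the only thing your count rests on. You give no bound on the number or cost of such shifts.
\item \emph{The fallback.} A whole colour class meets the budget only when $a=k$. For $2\le a\le k-1$ the centroid iteration still faces the same independence problem.
\end{itemize}

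\paragraph{What is missing.} You need a cut-selection rule that provably keeps the cut set independent, together with an accounting showing it still uses at most $a$ vertices. Without this, the proposal proves only the endpoint cases $a=1$ and $a=k$.
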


The proof relies on a spectral stability partition for vertex‑critical graphs. Recall that a graph $F$ is \emph{vertex-critical} if $\chi(F-v)=\chi(F)-1$ for some $v\in V(F)$ and $\chi(F)\ge3$. Since $\wh T$ is vertex‑critical, the following result of Zhang~\cite{Z} applies.
\begin{lemma}[Zhang \cite{Z}] \label{stability}
Let $T$ be a balanced tree and let $\theta>0$.  Then there exists $n_0=n_0(T,\theta)$ such that the following holds for every $n\ge n_0$.
		
If $G$ is an $n$-vertex $\wh T$-free graph with $\rho(G)=\spex(n,\wh T)$, then $G$ is connected and there is a partition $V(G)=V_0\cup V_1$ such that
\[
\left||V_i|-\frac n2\right|<\theta n \qquad (i=0,1),
\]
and, for every $v\in V_i$,
\[
d_{V_i}(v)<|V(\wh T)|,\qquad d_G(v)\ge n-|V_i|.
\]
Moreover, if $x=(x_v)_{v\in V(G)}$ is a Perron vector of $G$ normalized by $\max_{v\in V(G)}x_v=1$, then
\[
x_v>1-\theta \qquad \text{for all }v\in V(G).
\]
\end{lemma}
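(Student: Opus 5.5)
The plan is the standard spectral stability scheme for vertex-critical graphs: an edge-stability step, then a cleanup using the Perron vector, then an edge-switching step that exploits the maximality of $\rho(G)$. Let $h=|V(\wh T)|$, and fix constants $\theta\gg\varepsilon\gg\delta>0$.

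\textbf{Step 1: approximate structure.} Since $K_{\lfloor n/2\rfloor,\lceil n/2\rceil}$ is $\wh T$-free, we have $\rho(G)\ge \sqrt{\lfloor n^2/4\rfloor}\ge n/2-1$. Because $\chi(\wh T)=3$, Nikiforov's spectral Erd\H{o}s--Stone--Simonovits stability theorem applies. It shows that, after deleting a set $W$ of at most $\delta n$ vertices of small Perron weight, $G$ differs from a complete bipartite graph with parts of size $n/2\pm\delta n$ in at most $\varepsilon n^2$ edges. I will take a max-cut partition $V_0\cup V_1$ of $V(G)$, so every vertex has at least as many neighbours across the cut as inside its own part. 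With this choice, $\bigl||V_i|-n/2\bigr|<\theta n$ follows from the edge count $e(G)\ge \rho(G)^2-o(n^2)$ together with the near-completeness of the cut.

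\textbf{Step 2: bounded inner degrees and flat Perron vector.} Let $L$ be the set of vertices whose degree is at most $(1/2-\sqrt\varepsilon)n$; there are $O(\sqrt\varepsilon n)$ of them.
\begin{enumerate}
\item \emph{No vertex has $h$ neighbours in its own part.} Suppose $v\in V_0$ has $h$ neighbours in $V_0\setminus L$. These neighbours have almost complete, hence pairwise largely overlapping, neighbourhoods in $V_1$. Greedily embed $T$ with one colour class inside $N_{V_0}(v)$ and the other in common neighbourhoods in $V_1$; here the balancedness of $T$ is exactly what makes the cross-part embedding possible. Together with $v$ as the apex this gives $\wh T$, a contradiction. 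Neighbours of $v$ lying in $L$ are handled by showing, via the eigenvector, that $L$ is empty.
\item \emph{$L$ is empty.} From $\rho x_v=\sum_{u\sim v}x_u\le d(v)$ we get $x_v\le d(v)/\rho$. If $L\neq\emptyset$, delete all edges at a vertex of $L$ and join it completely to the opposite part. This produces a $\wh T$-free graph whose Rayleigh quotient is strictly larger, contradicting maximality.
\item \emph{Flatness of $x$.} Once every vertex has at most $h$ inner neighbours and misses only $O(\sqrt\varepsilon n)$ cross vertices, the equation $\rho x_v=\sum_{u\sim v}x_u$ gives $x_v\ge \bigl(\rho x_{\max}-O(\sqrt\varepsilon n)\bigr)/\rho\ge 1-\theta$.
\end{enumerate}

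\textbf{Step 3: full cross degree.} To get $d_G(v)\ge n-|V_i|$, suppose some $v\in V_i$ misses a vertex of $V_{1-i}$. Form $G'$ from $G$ by deleting the at most $h-1$ inner edges at $v$ and joining $v$ to all of $V_{1-i}$. The Rayleigh quotient changes by at least
\[
2x_v\Bigl(\sum_{u\in V_{1-i}\setminus N(v)}x_u-\sum_{u\in N_{V_i}(v)}x_u\Bigr).
\]
I expect this to be the main obstacle: as written, the bracket need not be positive (one missing cross edge can be outweighed by several deleted inner edges). I would first fix this by comparing $v$ with a vertex $w\in V_i$ that is complete to $V_{1-i}$, and replacing $N(v)$ by $N(w)$, i.e.\ making $v$ a clone of $w$. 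Such a $w$ exists because missing cross edges are sparse. Then two checks remain:
\begin{enumerate}
\item \emph{$G'$ is $\wh T$-free.} Any copy of $\wh T$ in $G'$ must use both $v$ and $w$, since $v$ and $w$ are now twins. I would argue this from vertex-criticality: $\wh T$ minus its apex is the bipartite tree $T$, so the copy can be re-embedded avoiding $v$, giving a copy in $G$.
\item \emph{$\rho(G')>\rho(G)$.} This follows from $x_w\ge x_v$-type inequalities, or from symmetrising on whichever of $v,w$ has larger weight.
\end{enumerate}
Finally, connectivity of $G$ follows from the fact that all Perron entries are positive and bounded below.
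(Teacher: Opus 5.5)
The paper does not prove this lemma; it quotes it from Zhang~\cite{Z}, so your sketch has to stand on its own. Steps~1--2 follow the standard spectral-stability scheme and are acceptable in outline, but Step~3 has a genuine gap.

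You read $d_G(v)\ge n-|V_i|$ as ``$v$ is complete to $V_{1-i}$''. The lemma only asserts that $v$ has at most $d_{V_i}(v)$ non-neighbours in $V_{1-i}$, and this weaker form is exactly what the paper uses, via $d_{V_{1-i}}(v)\ge |V_{1-i}|-d_{V_i}(v)$. To get the stronger claim you switch to cloning, and that argument is not valid. Making $v$ a twin of $w$ makes every inner neighbour $u$ of $w$ adjacent to $v$. Suppose $T$ has $2k$ or $2k+1$ vertices and some such $u\notin N_G(v)$ has $d_{V_i}(u)=k-1$, as happens in the extremal configurations, where one part induces a $(k-1)$-regular graph. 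Then in $G'$ the vertex $u$ has $k$ inner neighbours. Together with their common neighbourhood with $u$ in $V_{1-i}$, these span $K_{k,k+1}\supseteq T$ inside $N_{G'}(u)$, so $\widehat T\subseteq G'$. Vertex-criticality gives no way to re-embed such a copy away from $v$. Moreover, the cloning comparison yields only $\rho(G')\ge\rho(G)$, and only when $x_w\ge x_v$. If $x_v>x_w$ the symmetrisation runs the wrong way and leaves $v$'s missing edges untouched.

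The repair is to prove just what is stated, and your first switching already does it. Suppose $d_G(v)\le|V_{1-i}|-1$ and put $m=d_{V_i}(v)\le h-1$. Then $|V_{1-i}\setminus N(v)|\ge m+1$, so your bracket is at least $(1-\theta')(m+1)-m\ge 1-\theta' h>0$. This holds as soon as Step~2 gives $x_u>1-\theta'$ for all $u$ with $\theta'<1/h$, and you may take $\varepsilon$ that small.

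For $\widehat T$-freeness of $G'$: now $N_{G'}(v)=V_{1-i}$, so any copy of $\widehat T$ in $G'$ contains $v$ and uses at most $h-1$ of its neighbours, all in $V_{1-i}$. Every vertex of $V_{1-i}$ misses only $O(\sqrt{\varepsilon}\,n)$ vertices of $V_i$, so some $v'\in V_i$ outside the copy is adjacent to all of them. Substituting $v'$ for $v$ yields $\widehat T\subseteq G$. Hence $G'$ is $\widehat T$-free with $\rho(G')>\rho(G)$, a contradiction. The same substitution is what justifies $\widehat T$-freeness in your treatment of $L$.

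Two smaller points:
\begin{enumerate}
\item Lying outside $L$ does not by itself give a near-complete cross neighbourhood in a max-cut partition. You first need that no vertex has linearly many neighbours in both parts; a K\H{o}v\'ari--S\'os--Tur\'an argument inside $G[N(v)]$ finds $T$ there.
\item Connectivity does not follow from positivity of a Perron vector, since a disjoint union of two copies of a graph has a positive one. It is, however, immediate from the degree structure. Alternatively, $\widehat T$ is $2$-connected, so joining a component attaining $\rho(G)$ to another component by an edge preserves $\widehat T$-freeness and strictly increases $\rho$.
\end{enumerate}
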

		
We now record two standard spectral facts.  Let $G=(V,E)$ be a simple graph of order $n$ with adjacency matrix $A$.
\begin{definition}[Godsil and Royle \cite{GR01}]
 A partition $\pi=\{V_1,\dots,V_m\}$ of $V$ is \emph{equitable} if every  $v\in V_i$ has exactly $d_{ij}$ neighbors in $V_j$.  The matrix $Q=(d_{ij})_{i,j=1}^{m}$ is the quotient matrix of $G$ with respect to $\pi$.
\end{definition}

Equitable partitions reduce several spectral computations below to small quotient matrices.  The irreducibility condition identifies the Perron root.
\begin{definition}[Zhan \cite{Zhan}]
A nonnegative square matrix $M$ is \emph{irreducible} if its associated digraph $\mathcal{D}(M)$ is strongly connected, where $i\to j$ whenever $M_{ij}>0$.
\end{definition}

The following standard lemma is invoked whenever an equitable quotient matrix is used to compute a spectral radius.
\begin{lemma}[Godsil and Royle \cite{GR01}]\label{equitable partition}
Let $G$ have adjacency matrix $A$, and let $\pi=\{V_1,\dots,V_m\}$ be an
equitable partition with quotient matrix $Q$.  If $\lambda$ is an eigenvalue of $Q$ with eigenvector $\mathbf{x}=(x_1,\dots,x_m)^\mathsf{T}$, then the vector $\tilde{\mathbf{x}}$ defined by $(\tilde{\mathbf{x}})_v=x_i$ for all $v\in V_i$  is an eigenvector of $A$ for $\lambda$.  If $G$ is connected and $Q$ is irreducible,
then $\rho(G)=\rho(Q)$.
\end{lemma}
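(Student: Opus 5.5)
The statement just before this point is \Cref{equitable partition}, the standard equitable-partition lemma. I would prove it in two steps: first the eigenvector lifting, which is a direct computation, and then the identification of the Perron root via Perron--Frobenius.

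\emph{Lifting.} Let $P$ be the $n\times m$ characteristic matrix of $\pi$, so $P_{v,i}=1$ if $v\in V_i$ and $0$ otherwise. Equitability says exactly that $(AP)_{v,j}=|N(v)\cap V_j|=d_{ij}$ for every $v\in V_i$, which is the matrix identity $AP=PQ$. If $Q\mathbf{x}=\lambda\mathbf{x}$, then $\tilde{\mathbf{x}}=P\mathbf{x}$ satisfies
\[
A\tilde{\mathbf{x}}=AP\mathbf{x}=PQ\mathbf{x}=\lambda P\mathbf{x}=\lambda\tilde{\mathbf{x}}.
\]
Moreover $\tilde{\mathbf{x}}\neq 0$ whenever $\mathbf{x}\neq 0$, because the parts are nonempty and so $P$ has full column rank. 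Hence $\tilde{\mathbf{x}}$ is an eigenvector of $A$ for $\lambda$.

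\emph{Perron root.} Since $Q$ is nonnegative and irreducible, Perron--Frobenius gives the eigenvalue $\rho(Q)$ a strictly positive eigenvector $\mathbf{x}$. By the first step, $P\mathbf{x}$ is then a strictly positive eigenvector of $A$ with eigenvalue $\rho(Q)$. Because $G$ is connected, $A$ is a nonnegative irreducible matrix. For such a matrix, the only eigenvalue with a positive eigenvector is $\rho(A)$: pair with the positive left Perron vector $\mathbf{y}$ of $A$ to get $\rho(A)\,\mathbf{y}^\mathsf{T}P\mathbf{x}=\mathbf{y}^\mathsf{T}AP\mathbf{x}=\rho(Q)\,\mathbf{y}^\mathsf{T}P\mathbf{x}$. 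Since $\mathbf{y}^\mathsf{T}P\mathbf{x}>0$, this gives $\rho(G)=\rho(Q)$.

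\emph{Obstacle.} The only step needing care is this last one, identifying $\rho(Q)$ with $\rho(A)$ rather than with some other eigenvalue of $A$. It is handled by the positivity-pairing argument, which uses both irreducibility hypotheses.
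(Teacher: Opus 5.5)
Your proof is correct. The identity $AP=PQ$ for the characteristic matrix $P$ gives the lifting, and pairing the strictly positive vector $P\mathbf{x}$ with a left Perron vector of $A$ identifies $\rho(Q)$ with $\rho(A)$. The paper gives no proof of its own; it simply cites Godsil and Royle, where the result rests on the same identity $AP=PQ$.

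One side remark: your pairing step needs only one of the two positivity facts. In fact $\rho(G)=\rho(Q)$ holds for every equitable partition, with no irreducibility hypotheses. Every eigenvalue of $Q$ is an eigenvalue of $A$, so $\rho(Q)\le\rho(A)$. Conversely, if $\mathbf{y}$ is a nonnegative Perron vector of $A$, then $P^{\mathsf{T}}\mathbf{y}$ is nonzero and is a left eigenvector of $Q$ for $\rho(A)$, so $\rho(A)\le\rho(Q)$.
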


We also need an upper bound for joins in terms of the maximum degrees inside the two parts.
\begin{lemma} [Tait \cite{MT19}]\label{join spectral bound}
Let $H_0$ and $H_1$ be graphs on $n_0$ and $n_1$ vertices, respectively, with maximum degrees $d_0$ and $d_1$.  Let
\[
H:=H_0\vee H_1
\]
be their join.  Then
\[
\rho(H)\le
\frac{d_0+d_1+\sqrt{(d_0-d_1)^2+4n_0n_1}}{2}.
\]
Furthermore, equality can hold only if both $H_0$ and $H_1$ are regular.
\end{lemma}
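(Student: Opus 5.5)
The plan is to work with a Perron vector of $H$ and sum the eigenvalue equations over each side of the join. This turns the problem into a pair of linear inequalities in two unknowns, and a quadratic inequality in $\rho(H)$ then gives the bound.

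Set $\rho=\rho(H)$. Assume $n_0,n_1\ge 1$; otherwise $H$ equals one of the $H_i$, and the bound reduces to $\rho(H_i)\le d_i$, which is standard. Then $H$ is connected, because every vertex of $V(H_0)$ is adjacent to every vertex of $V(H_1)$. Hence $H$ has a Perron vector $x$ with all entries strictly positive. Put $X_0=\sum_{v\in V(H_0)}x_v$ and $X_1=\sum_{v\in V(H_1)}x_v$.

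For $v\in V(H_0)$ the eigenvalue equation reads $\rho x_v=\sum_{u\in N_{H_0}(v)}x_u+X_1$. Summing over $v\in V(H_0)$ and exchanging the order of summation gives
\[
\rho X_0=\sum_{u\in V(H_0)}d_{H_0}(u)x_u+n_0X_1\le d_0X_0+n_0X_1 .
\]
Symmetrically, $\rho X_1\le d_1X_1+n_1X_0$.

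Next I dispose of a trivial case. The right-hand side $\frac{d_0+d_1+\sqrt{(d_0-d_1)^2+4n_0n_1}}{2}$ is at least $\frac{d_0+d_1+|d_0-d_1|}{2}=\max\{d_0,d_1\}$. So if $\rho\le\max\{d_0,d_1\}$ there is nothing to prove, and we may assume $\rho>d_0$ and $\rho>d_1$. Then both $(\rho-d_0)X_0\le n_0X_1$ and $(\rho-d_1)X_1\le n_1X_0$ have nonnegative left-hand sides. Multiplying them and cancelling $X_0X_1>0$ yields
\[
\rho^2-(d_0+d_1)\rho+d_0d_1-n_0n_1\le 0 .
\]
Hence $\rho$ is at most the larger root of this quadratic, which is exactly the stated bound.

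For the equality statement, suppose $\rho$ equals the bound. If we are in the trivial case, then $\rho=\max\{d_0,d_1\}$ forces $n_0n_1=0$, which was excluded (in the degenerate case, $\rho(H_i)=d_i$ forces regularity of a connected $H_i$, or one argues the same way componentwise). Otherwise both summed inequalities must be equalities. The first equality says $\sum_u (d_0-d_{H_0}(u))x_u=0$, and since every $x_u>0$ this forces $d_{H_0}(u)=d_0$ for all $u$, so $H_0$ is regular. Likewise $H_1$ is regular.

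The only delicate point is ensuring the two factors $\rho-d_i$ are nonnegative before multiplying the inequalities. The observation that the bound already dominates $\max\{d_0,d_1\}$ handles this step.
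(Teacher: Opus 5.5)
Your proof is correct and complete. Summing the eigen-equations over each side and disposing of the case $\rho\le\max\{d_0,d_1\}$ makes both factors $\rho-d_i$ positive before you multiply. Reading off regularity from $\sum_{u}(d_0-d_{H_0}(u))x_u=0$ with $x>0$ then gives both the bound and the equality clause.

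The paper does not prove this lemma; it quotes it from Tait. Your argument is the standard one, and it is the same summation device the paper uses in Case~3, where it sums the eigen-equations over $V(H)$ and uses $\lambda y=\sum_{v}z_v$.

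One throwaway remark should be dropped. If a side were allowed to be empty, $\rho(H_1)=d_1$ would \emph{not} force regularity when $H_1$ is disconnected (e.g.\ $K_3\cup K_1$). ``Arguing componentwise'' only yields one $d_1$-regular component. Simply take $n_0,n_1\ge 1$, which is what the lemma intends and what the paper's application uses (both parts have size about $n/2$).
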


The following elementary estimate is used to find large common neighbors.
\begin{lemma}[Cioab\u{a}, Feng, Tait, and Zhang \cite{CFTZ20}]\label{inclusion exclusion lemma}
If $A_1, \ldots, A_q$ are finite sets, then 
\[|A_1 \cap \ldots \cap A_q| \geq \sum_{i=1}^q |A_i| - (q-1)\bigg| \bigcup_{i=1}^q A_i \bigg|.\]
\end{lemma}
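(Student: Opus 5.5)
The statement to prove is the inclusion--exclusion bound $|A_1\cap\cdots\cap A_q|\ge\sum_{i=1}^q|A_i|-(q-1)|\bigcup_{i=1}^q A_i|$. The plan is to count over elements of the union $U:=\bigcup_{i=1}^q A_i$ and compare the contribution of each element to both sides.

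For $u\in U$, let $m(u)$ be the number of indices $i$ with $u\in A_i$, so $1\le m(u)\le q$. Then
\[
\sum_{i=1}^q |A_i|=\sum_{u\in U} m(u).
\]
Splitting $U$ into the elements of $A_1\cap\cdots\cap A_q$ (where $m(u)=q$) and the remaining elements (where $m(u)\le q-1$) gives
\[
\sum_{i=1}^q|A_i|\le q\,|A_1\cap\cdots\cap A_q|+(q-1)\bigl(|U|-|A_1\cap\cdots\cap A_q|\bigr)=|A_1\cap\cdots\cap A_q|+(q-1)|U|.
\]
Rearranging yields exactly the claimed inequality.

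Equivalently, one can induct on $q$ using $|A\cap B|=|A|+|B|-|A\cup B|\ge |A|+|B|-|U|$. However, the double-counting argument above is a single step and handles all $q$ at once. There is no real obstacle. The only point to check is that every element of $U$ is counted at most $q-1$ times unless it lies in every $A_i$, and this is immediate from the definition of $m(u)$.
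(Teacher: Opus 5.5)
Your double-counting argument is correct and complete. Writing $\sum_i |A_i|=\sum_{u\in U} m(u)$ and using $m(u)\le q-1$ off the intersection gives exactly $\sum_i|A_i|\le |A_1\cap\cdots\cap A_q|+(q-1)|U|$, and rearranging gives the claim.

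The paper does not prove this lemma at all; it quotes it from \cite{CFTZ20}, so there is no competing argument in the text to compare against. Your alternative, induction on $q$ via $|A\cap B|=|A|+|B|-|A\cup B|$, is the other standard route and is equally elementary. Your one-step count has a small advantage over it: it shows the equality case directly, namely that every element of the union lies in all $q$ sets or in exactly $q-1$ of them. It also makes clear that the bound is only informative when each $A_i$ misses few elements of the union. That is how the paper uses it, for $k+1$ cross-neighbourhoods in $V_{1-i}$, each missing at most $2k+1$ vertices.
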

\section{Proof of Theorems \ref{main} and \ref{disjoint}}
\begin{proof}[Proof of  \Cref{main}] To prove the upper bound, it suffices to consider a $\wh T$-free graph $G$ on $n$ vertices satisfying $\rho(G)=\spex(n,\widehat{T})$ for sufficiently large $n$. Let $x=(x_v)_{v\in V(G)}$ be a Perron vector normalized by $\max_{v\in V(G)}x_v=1$.
	
Set
\[
\theta:=\frac{1}{400k}.
\]
By \Cref{stability}, since $n$ is sufficiently large, there is a partition
\[
V(G)=V_0\cup V_1
\]
such that
\begin{equation}\label{stability-balanced-suspension}
\left||V_i|-\frac n2\right|<\theta n,\qquad d_{V_i}(v)<|V(\wh T)|,\qquad d_G(v)\ge n-|V_i|,\qquad x_v>1-\theta
\end{equation}
for every $i\in\{0,1\}$ and every $v\in V_i$. Thus
\begin{equation}\label{cross degree}
    d_{V_{1-i}}(v)=d_G(v)-d_{V_i}(v)\ge |V_{1-i}|-d_{V_i}(v)
\end{equation}
for every $v\in V_i$.
	
\vspace{3pt}
\begin{claim}\label{max degree}
For $i\in\{0,1\}$, we have
\begin{equation}
\Delta(G[V_i])\le k-1.
\end{equation}
\end{claim}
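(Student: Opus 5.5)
We argue by contradiction: suppose some $v\in V_0$ (the case $i=1$ is symmetric) has $d_{V_0}(v)\ge k$, and let $u_1,\dots,u_k\in N_{V_0}(v)$. We will embed $\wh T$ in $G$ with $v$ as the apex. The leftover structure must then be a copy of $T$ in $N_G(v)$. Since $T$ is balanced, it has a bipartition with one class $A$ of size $k$ and another class $B$ of size $k$ or $k+1$. The plan is to map $A$ onto $\{u_1,\dots,u_k\}\subseteq N_{V_0}(v)$ and to map $B$ into common neighbours in $V_1$.

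\textbf{Step 1: lower-bound degrees.} By \eqref{stability-balanced-suspension} and \eqref{cross degree}, every vertex $w\in V_0$ satisfies $d_{V_1}(w)\ge |V_1|-|V(\wh T)|\ge |V_1|-(2k+2)$. Applying this to $v,u_1,\dots,u_k$ gives $k+1$ subsets $N_{V_1}(v),N_{V_1}(u_1),\dots,N_{V_1}(u_k)$ of $V_1$, each missing at most $2k+2$ vertices of $V_1$.

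\textbf{Step 2: find a common neighbourhood.} Applying \Cref{inclusion exclusion lemma} to these $k+1$ subsets, with union at most $|V_1|$, gives
\[
|C|\ge (k+1)\big(|V_1|-2k-2\big)-k|V_1| = |V_1|-(k+1)(2k+2),
\]
where $C:=N_{V_1}(v)\cap\bigcap_j N_{V_1}(u_j)$. Since $|V_1|>(1/2-\theta)n$ and $n$ is large, $|C|\ge k+1$.

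\textbf{Step 3: embed $T$.} Map $A$ bijectively onto $\{u_1,\dots,u_k\}$, and map $B$ injectively into $C$. Every edge of $T$ joins $A$ to $B$, and every vertex of $C$ is adjacent to all the $u_j$, so all tree edges are present. No edges inside $\{u_j\}$ are needed. Because $v$ is adjacent to every $u_j$ and to every vertex of $C$, the vertex $v$ together with this copy of $T$ forms a $\wh T$ in $G$. This contradicts that $G$ is $\wh T$-free.

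The main obstacle would ordinarily be Step 2, but the degree bound $d_{V_i}(w)<|V(\wh T)|$ from \Cref{stability} makes each cross-neighbourhood miss only $O(k)$ vertices, so the counting is immediate. The only care needed is checking that $B$ has at most $k+1$ vertices, which holds since $T$ is balanced on $2k$ or $2k+1$ vertices. It is also worth noting that we do not need \Cref{tree-packing} here: because the whole class $A$ lies inside $N_{V_0}(v)$, it suffices that these vertices have at least $k+1$ common neighbours in $V_1$.
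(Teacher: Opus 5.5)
Your proposal is correct and is essentially the paper's proof. Both arguments use the stability degree bounds and \Cref{inclusion exclusion lemma} to get at least $k+1$ common neighbours of $v,u_1,\dots,u_k$ in the opposite part, then embed $T$ in the resulting $K_{k,k+1}$ with $v$ as the apex; your per-vertex bound of $2k+2$ missing neighbours is slightly weaker than the paper's $2k+1$, which is harmless.
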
 
\begin{proof}[Proof of Claim \ref{max degree}]
Suppose not. Some $u\in V_i$ has $k$ neighbors $u_1,\dots,u_k$ in $V_i$. By \eqref{stability-balanced-suspension} and \eqref{cross degree}, $d_{V_{1-i}}(v)\ge |V_{1-i}|-2k-1$ for all $v\in V_i$, thus \Cref{inclusion exclusion lemma} yields for $n\ge 50k^3$,
\[
|N_{V_{1-i}}(u)\cap\cdots\cap N_{V_{1-i}}(u_k)| \ge (k+1)(|V_{1-i}|-2k-1)-k|V_{1-i}| \ge (\frac12-\theta)n-(2k+1)(k+1)\ge k+1.
\]
The bipartite graph between $\{u_1,\dots,u_k\}$ and this common neighbor contains $K_{k,k+1}$, hence a copy of $T$ and together with $u$ this yields $\wh T$, a contradiction.
\end{proof}	
If $k=1$, then Claim \ref{max degree} implies that both $G[V_0]$ and $G[V_1]$ are edgeless. Thus $G$ is bipartite, and 
\[
\rho(G)\le \sqrt{|V_0||V_1|} \le \sqrt{\lfloor n/2\rfloor\lceil n/2\rceil} =\phi(n,1).
\]
	Equality holds here for the complete bipartite graph
	$K_{\lfloor n/2\rfloor,\lceil n/2\rceil}\in \mathcal{G}_{\floor{n/2},k-1}\cup\mathcal{G}_{\ceil{n/2},k-1}$. Thus we may assume from now on that $k\ge 2$.
	
Let
\[
d_i:=\Delta(G[V_i])\qquad (i=0,1).
\]
\begin{claim}\label{sum degree}
We have 
\begin{equation}
	d_0+d_1\le k-1.
\end{equation}
\end{claim}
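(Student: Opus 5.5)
We argue by contradiction. Suppose $d_0+d_1\ge k$, and pick $u_0\in V_0$ with $d_{V_0}(u_0)=d_0$ and $u_1\in V_1$ with $d_{V_1}(u_1)=d_1$. Let $A_0=N_{V_0}(u_0)$ and $A_1=N_{V_1}(u_1)$, so $|A_0|+|A_1|\ge k$. We may assume $d_0\ge 1$. The goal is to embed $\widehat T$ with its apex at $u_0$, using the abundance of cross edges given by \eqref{stability-balanced-suspension} and \eqref{cross degree}. Every vertex of $V_i$ misses at most $2k+1$ vertices of $V_{1-i}$, so the bipartite graph $G[V_0,V_1]$ is almost complete.

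First we should make sure $u_0$ and $u_1$ can be used together. By \eqref{cross degree}, $u_0$ is adjacent to all but at most $2k+1$ vertices of $V_1$. If $u_1\notin N(u_0)$ or $A_1\not\subseteq N(u_0)$, we would like to replace $u_1$ by another vertex of large degree in $G[V_1]$. This may not be possible, since $d_1$ is only the maximum degree. So instead we count how many of the $d_1+1$ vertices of $\{u_1\}\cup A_1$ are neighbours of $u_0$. A cleaner route is the following. Let $S_1=\{u_1\}\cup A_1$, which induces a star $K_{1,d_1}$ in $V_1$. The set $N_{V_1}(u_0)$ misses at most $2k+1$ vertices, and this is a problem only if $S_1$ meets the missed set. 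To avoid this, choose the apex among common neighbours instead. Namely, the apex will be a vertex $w\in V_0$ that is adjacent to $u_0$ and to all of $A_0$ (inside $V_0$, so $w=u_0$ itself), and also to all of $S_1$. Since $u_0$ is the only vertex adjacent to all of $A_0$ within $V_0$, we need $S_1\subseteq N(u_0)$. This is the main obstacle.

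The way around it is to pick the pair cleverly. Consider edges $u_0a$ with $a\in A_0$, and select $u_1$ among the vertices of $V_1$ of degree $d_1$ in $G[V_1]$. Alternatively, drop the star structure and only use that the apex needs neighbours in both parts. The apex $u_0$ has $d_0$ neighbours in $V_0$ and at least $|V_1|-d_0$ neighbours in $V_1$, and $G[N_{V_1}(u_0)]$ still contains edges provided $d_1\ge1$ is realized there. If $G[V_1]$ has maximum degree $d_1$ on a star entirely inside $N(u_0)$ we are done. Otherwise we swap roles, letting $u_1$ be the apex and checking $S_0=\{u_0\}\cup A_0\subseteq N(u_1)$. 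If both swaps fail, we fall back on a weaker structure, for example a star of degree $d_1-1$ after deleting one non-neighbour, and then need $|A_0|+|A_1|\ge k$ with slack. I expect this case analysis to be delicate, and I would first try to show, by a Perron-vector perturbation argument (deleting and adding edges while using $x_v>1-\theta$), that all missing cross edges can be assumed absent near maximal-degree vertices.

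Once the apex $w$ sees a star $K_{1,a}$ in one part and a star $K_{1,b}$ in the other with $a+b\ge k$, we embed $T$ into $N(w)$. Write $T$ with colour classes $X$ and $Y$, where $|X|=k$ and $|Y|\in\{k,k+1\}$. Apply \Cref{splitting} with $a$ to obtain an independent set $I_a$ of size at most $a$ whose removal leaves components of size at most $k-a+1\le b+1$. Place $I_a$ on $A_0\cup\{u_0\}$-type vertices. The components can then be embedded using the star in $V_1$ together with the nearly complete bipartite graph between $N_{V_0}(w)$ and $N_{V_1}(w)$. The common neighbourhoods needed have size linear in $n$ by \Cref{inclusion exclusion lemma}, exactly as in the proof of Claim~\ref{max degree}. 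This yields $\widehat T\subseteq G$, a contradiction, so $d_0+d_1\le k-1$.
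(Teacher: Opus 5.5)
There is a genuine gap. Your final step fails, and so does the strategy behind it. The hypothesis $d_0+d_1\ge k$ does not by itself produce a copy of $\widehat T$, even when every cross edge between $V_0$ and $V_1$ is present.

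Here is a counterexample. Take $k\ge 3$ and let $T$ be the spider with centre $c$ and $k$ legs $c\,a_i\,b_i$. It is balanced on $2k+1$ vertices and has diameter $4$, so the Erd\H{o}s--S\'os hypothesis holds for all its subtrees. Let $G$ consist of three pieces:
\begin{itemize}
\item the complete bipartite graph on $(V_0,V_1)$;
\item a star with centre $s$ and leaf set $L$, $|L|=k-1$, inside $V_0$;
\item a single edge $pq$ inside $V_1$.
\end{itemize}
Then $d_0=k-1$ and $d_1=1$, and $G$ meets the partition and degree conditions in \eqref{stability-balanced-suspension}.

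Now check that $G$ is $\widehat T$-free. Since $a_1b_1,\dots,a_kb_k$ is a matching, every vertex cover of $T$ has at least $k$ vertices. Moreover $\{a_1,\dots,a_k\}$ is the only cover of size $k$.
\begin{itemize}
\item For apex $s$, the graph $G[N(s)]$ has the vertex cover $L\cup\{p\}$ of size $k$. An embedding of $T$ would therefore send some $a_j$ to $p$, and both $c$ and $b_j$ to neighbours of $p$ in $V_1$. But $q$ is the only such neighbour.
\item Every other apex $w$ gives a graph $G[N(w)]$ with a vertex cover of size at most $2<k$.
\end{itemize}

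So cleaning up missing cross edges by a Perron perturbation does not help, since none are missing here. The embedding you describe also fails concretely. $T-\{c\}$ is $k$ disjoint edges, while the single maximum-degree star in $V_1$ supplies only one. In general $T-I_a$ has up to $2k$ components, and one star cannot host them.

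What is missing is the use of the spectral extremality of $G$. The paper uses $d_0+d_1\ge k$ not to find $\widehat T$ but to bound the number of edges inside the parts.
\begin{enumerate}
\item Relabel so that $d_1\ge 1$ and $d_0\ge k-d_1$. Take $y\in V_0$ together with $k-d_1$ of its $V_0$-neighbours. Their common neighbourhood $Y\subseteq V_1$ misses only $O(k^2)$ vertices.
\item Apply \Cref{splitting} with $a=k-d_1$, together with \Cref{tree-packing}; this is where the Erd\H{o}s--S\'os hypothesis enters. This shows $G[Y]$ has fewer than $\frac{(d_1-1)|Y|}{2}+3k^3$ edges, so $e(G[V_1])<\frac{(d_1-1)|V_1|}{2}+5k^3$.
\item Symmetrically, around a maximum-degree vertex of $V_1$ with $a=d_1$, one gets $e(G[V_0])<\frac{(k-d_1-1)|V_0|}{2}+5k^3$.
\item Compare with $H$: the complete bipartite graph on $(V_0,V_1)$ with disjoint copies of $K_{k-1,k-1}$ placed in $V_0$. $H$ is $\widehat T$-free by \Cref{prop}. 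The bounds above give $e(H)-e(G)>0.246\,n-11k^3$.
\item Since $x_v>1-\theta$ for all $v$ and $|E(H)\setminus E(G)|<\frac52 kn$, comparing Rayleigh quotients gives $\rho(H)>\rho(G)$. This contradicts the maximality of $\rho(G)$.
\end{enumerate}
Your proposal never invokes the tree-packing lemma, the Erd\H{o}s--S\'os hypothesis, or a competitor graph. It therefore lacks the idea that actually proves the claim.
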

\begin{proof}[Proof of Claim \ref{sum degree}]
Suppose that $d_0+d_1\ge k$. After relabeling the parts if necessary, we may assume that $d_1\ge 1$ and $d_0\ge k-d_1$ by the assumption $d_0+d_1\ge k$.
	
Choose a vertex $y\in V_0$ with $d_{V_0}(y)=d_0$, and choose distinct neighbors $y_1,\dots,y_{k-d_1}\in N_{V_0}(y)$. Define
\[
Y:=N_{V_1}(y)\cap N_{V_1}(y_1)\cap\cdots\cap N_{V_1}(y_{k-d_1}).
\]
By \eqref{stability-balanced-suspension} and \eqref{cross degree}, each of the $k-d_1+1$ vertices $y,y_1,\dots,y_{k-d_1}$ has at most $|V(\wh T)|-1=|V(T)|\le 2k+1$ non-neighbors in $V_1$. Therefore,
\begin{equation}\label{eq:Y-large}
|V_1|-k(2k+1)\le |Y|\le |V_1|.
\end{equation}
	
Since
\[
|V_1|-k(2k+1) \ge \left(\frac12-\theta\right)n-k(2k+1)\ge 7k^3
\]
for $n\ge 50k^3$, we have $|Y|\ge 7k^3$. Apply \Cref{splitting} with $a=k-d_1$. We obtain an independent set $I\subseteq V(T)$ with $|I|\le k-d_1$ such that
\[
T-I=T_1\cup\cdots\cup T_R,
\]
where $R\le 2k$ and $|T_j|\le d_1+1$ for every $j$.  Let $\mathcal{T}^{+}=\{T_j:|T_j|\ge 2\}$. Since each $T_j\in \mathcal{T}^{+}$ is a subtree of $T$, the Erd\H{o}s--S\'os conjecture holds for $T_j\in \mathcal{T}^{+}$ by hypothesis.
	
Now $G[Y]$ has maximum degree at most $d_1$. 
\begin{subclaim}\label{eV1}
\begin{equation}
e(G[V_1])< \frac{(d_1-1)|V_1|}{2}+5k^3.
\end{equation}
\end{subclaim}
\begin{proof}[Proof of Claim \ref{eV1}]
We first show that $e(G[Y])<\frac{(d_1-1)|Y|}{2}+3k^3$. If
\[
e(G[Y])\ge \frac{(d_1-1)|Y|}{2}+3k^3,
\]
then \Cref{tree-packing}, applied with $K=k$ and $k'=d_1-1$, yields vertex-disjoint copies of $T_1,\dots,T_R$ in $G[Y]$. Since every vertex of $Y$ is adjacent to every vertex among $y,y_1,\dots,y_{k-d_1}$, the graph $G[N(y)]$ contains a copy of $T$, and hence $G$ contains a copy of $\wh T$, a contradiction. Therefore,
\begin{equation}\label{Y-edge-upper}
e(G[Y])<\frac{(d_1-1)|Y|}{2}+3k^3.
\end{equation}
Every edge of $G[V_1]$ either lies inside $Y$ or is incident with a vertex of $V_1\setminus Y$. Hence 
\[
e(G[V_1])\le e(G[Y])+(|V_1|-|Y|)d_1.
\]
Using \eqref{eq:Y-large} and \eqref{Y-edge-upper}, and recalling that $|V_1|-|Y|\le k(2k+1)$ and $d_1\le k-1$, we obtain that
\[
e(G[V_1])\le e(G[Y])+(|V_1|-|Y|)d_1< \frac{(d_1-1)|Y|}{2}+3k^3+2k^3\le \frac{(d_1-1)|V_1|}{2}+5k^3.
\]
\end{proof}
Choose $z\in V_1$ with $d_{V_1}(z)=d_1$, and let $z_1,\dots,z_{d_1}$ be its neighbors in $V_1$.  Put
\[
X:=N_{V_0}(z)\cap N_{V_0}(z_1)\cap\cdots\cap N_{V_0}(z_{d_1}).
\]
As before, $|V_0|-|X|\le k(2k+1)$ and $|X|\ge 7k^3$.  Applying \Cref{splitting} with $a=d_1$, we obtain an independent set $I'\subseteq V(T)$ with $|I'|\le d_1$ such that
\[
T-I'=S_1\cup\cdots\cup S_M,
\]
where $M\le 2k$ and $|S_j|\le k-d_1+1$ for all $j$.
\begin{subclaim}\label{ev0}
\begin{equation}
e(G[V_0])< \frac{(k-d_1-1)|V_0|}{2}+5k^3.
\end{equation}
\end{subclaim}
\begin{proof}[Proof of Claim \ref{ev0}] We first show that $e(G[X])< \frac{(k-d_1-1)|X|}{2}+3k^3$.
If 
\[
e(G[X])\ge \frac{(k-d_1-1)|X|}{2}+3k^3,
\]
then \Cref{tree-packing}, applied with $K=k$ and $k'=k-d_1-1$, yields vertex-disjoint copies of $S_1,\dots,S_M$ in $G[X]$. Since every vertex of $X$ is adjacent to every vertex among $z,z_1,\dots,z_{d_1}$, the graph $G[N(z)]$ contains a copy of $T$, and hence $G$ contains a copy of $\wh T$, a contradiction. Therefore,
\[
e(G[X])<\frac{(k-d_1-1)|X|}{2}+3k^3.
\]
Arguing exactly as above in Claim~\ref{eV1}, and using $|V_0|-|X|\le k(2k+1)$ and $d_0\le k-1$, we obtain
\[
e(G[V_0])< \frac{(k-d_1-1)|V_0|}{2}+5k^3.
\] 
\end{proof}
From Claims \ref{eV1} and \ref{ev0}, and the trivial bound $e(V_0,V_1)\le |V_0||V_1|$, we obtain
\begin{align}
e(G)&< |V_0||V_1|+\frac{(k-d_1-1)|V_0|}{2}+\frac{(d_1-1)|V_1|}{2}+10k^3\nonumber\\
    &= |V_0||V_1|+\frac{k-1}{2}|V_0|-\frac12|V_1|+\frac{d_1}{2}\bigl(|V_1|-|V_0|\bigr)+10k^3\nonumber\\
	& \le|V_0||V_1|+\frac{k-1}{2}|V_0|-\frac12|V_1|+k\theta n+10k^3.
\label{total-bound}
\end{align}
	
Now, let $F_0$ be the disjoint union of as many copies of $K_{k-1,k-1}$ as possible on the vertex set $V_0$, together with the remaining isolated vertices. Then $\Delta(F_0)\le k-1$, and because fewer than $2k-2$ vertices remain uncovered,
\begin{equation} \label{F0-edge-lower-bound}
e(F_0)\ge \frac{k-1}{2}|V_0|-(k-1)^2.
\end{equation}
Since  $F_0$ is triangle-free with maximum degree at most $k-1$, and every component of $F_0$ has order at most $2k-2<|V(T)|$,  these facts implies that $F_0$ contains neither $T$ nor any graph of the form $\wh{T[S]}$ from \Cref{decomp sus tree}. Consequently, $F_0$ contains no member of $\cM(\wh T)$ as a subgraph.
	
Let $H$ be obtained from the complete bipartite graph with partite classes $V_0$ and $V_1$ by embedding $F_0$ into $V_0$. By \Cref{prop} and the previous
paragraph, $H$ is $\wh T$-free. Combining \eqref{total-bound} and \eqref{F0-edge-lower-bound} with $|V_1|\ge n/2-\theta n$, we get
\begin{align}
e(H)-e(G)&\ge \frac12|V_1|-k\theta n-10k^3-(k-1)^2\nonumber\\
		&\ge \left(\frac14-\left(k+\frac12\right)\theta\right)n-11k^3.
\label{edge-gap-HG}
\end{align}
Because $\theta=1/(400k)$, the coefficient of $n$ in \eqref{edge-gap-HG} is positive. More precisely, we obtain 
\begin{equation}\label{eq:edge-gap-linear}
e(H)-e(G)\ge \left(\frac14-\frac{k+\frac12}{400k}\right)n-11k^3>0.246\,n-11k^3.
\end{equation}

Let
\[
\alpha:=|E(H)\setminus E(G)|,\qquad \beta:=|E(G)\setminus E(H)|.
\]
 Since each vertex of $V_0$ has at most $|V(\wh T)|-1=|V(T)|\le 2k+1$ non-neighbors in $V_1$, the number of missing cross-edges of $G$ is at most
\begin{equation}
		(2k+1)|V_0|<(2k+1)\left(\frac12+\theta\right)n,
\end{equation}
while
\[
e(F_0)\le \frac{k-1}{2}|V_0|<\frac{k-1}{2}\left(\frac12+\theta\right)n.
\]
Thus
\begin{equation}\label{eq:A-upper}
\alpha\le(2k+1)|V_0|+e(F_0)<(2k+1)\left(\frac12+\theta\right)n+\frac{k-1}{2}\left(\frac12+\theta\right)n<\frac{5}{2}kn.
\end{equation}
Also, by \eqref{eq:edge-gap-linear},
\begin{equation}\label{eq:AminusB-lower}
\alpha-\beta>0.246\,n-11k^3.
\end{equation}
Since every Perron entry satisfies $x_v>1-\theta$, we have
\begin{align}
x^\top(A(H)-A(G))x&=2\sum_{uv\in E(H)\setminus E(G)}x_ux_v-2\sum_{uv\in E(G)\setminus E(H)}x_ux_v\\
&\ge 2(1-\theta)^2\alpha-2\beta =2(\alpha-\beta)-2(2\theta-\theta^2)\alpha.
\end{align}
Using \eqref{eq:A-upper}, we have
\[
2(2\theta-\theta^2)\alpha<4\theta \alpha<\frac{n}{50}.
\]
Combining this with \eqref{eq:AminusB-lower}, we see that for every $n\ge 50k^3$, the quantity $2(\alpha-\beta)-2(2\theta-\theta^2)\alpha>0$. Hence
\[
\rho(H) \ge \frac{x^\top A(H)x}{x^\top x}>\frac{x^\top A(G)x}{x^\top x}=\rho(G),
\]
contradicting the extremality of $G$. This contradiction proves Claim \ref{sum degree}.
\end{proof}
    We shall use the standard fact that adding an edge to a connected graph strictly increases its spectral radius.

\noindent\textbf{Case 1:} Assume $n\not\equiv2\pmod4$ or $k$ is odd.  Let $G^\ast:=G[V_0]\vee G[V_1]$.  By \Cref{join spectral bound} and Claim \ref{sum degree}, 
{\small{\[
	\rho(G)\le \rho(G^\ast)\le
	\frac{d_0+d_1+\sqrt{(d_0-d_1)^2+4|V_0||V_1|}}{2}
	\le \frac{k-1+\sqrt{(k-1)^2+4\lfloor n/2\rfloor\lceil n/2\rceil}}{2}\le \phi(n,k).
	\]}}
The second-to-last inequality holds as $|V_0||V_1|\le \lfloor n/2\rfloor\lceil n/2\rceil,d_0+d_1\le k-1$ and $|d_0-d_1|\le d_0+d_1\le k-1$. If equality holds, then $G=G^\ast$, $|V_0||V_1|=\lfloor n/2\rfloor\lceil n/2\rceil$, $d_0+d_1=k-1$, and $|d_0-d_1|=k-1$.  Thus one partite class is independent and, by the equality condition in \Cref{join spectral bound}, the other partite class induces a $(k-1)$-regular graph.  Since $G$ is $\wh T$-free, the induced $(k-1)$-regular graph is $\cM(\wh T)$-free. Hence $G\in \mathcal{G}_{\floor{n/2},k-1}\cup\mathcal{G}_{\ceil{n/2},k-1}$.

\noindent\textbf{Case 2:} Let $n\equiv2\pmod4$ and $k=2$. After relabeling, we may assume that $\Delta(G[V_0])\le1$ and that $G[V_1]$ is independent.  Write $a=|V_0|$ and $b=|V_1|$.  Let $F_{a,b}$ be obtained from $K_{a,b}$ by adding a maximum matching in the partite class of size $a$.  Then $G\subseteq F_{a,b}$,  and hence $\rho(G)\le \rho(F_{a,b})$.

Suppose first that $a$ is even. Then the matching is perfect. The partition of $F_{a,b}$ into the two partite classes is equitable with quotient matrix
\[
\begin{pmatrix}
1&b\\ 
a&0
\end{pmatrix}.
\]
By \Cref{equitable partition}, $\rho(F_{a,b})$ is the larger root of $x^2-x-ab=0$.  Since $\tfrac{n}{2}$ is odd and $a$ even, we have $ab\le (n/2)^2-1$, which implies that  $\rho(F_{a,b})\le\lambda_0$, where $\lambda_0$ is the largest root of
\[
x^2-x-\left(\frac{n}{2}\right)^2+1=0.
\]
Next, we claim that $\lambda_0<\eta(n)$. Let $h_{n/2}(x):=x^3-x^2-(\tfrac{n}{2})^2x+\frac{n}{2}$.  Recall that $\eta(n)$ is the largest root of $h_{n/2}(x)=0$. Since $h_{n/2}(x)$ is a cubic function in one variable with a positive leading coefficient, to prove the claim above, we only need to show $\tfrac{n}{2}<\lambda_0$, $h_{n/2}(\lambda_0)<0$, and $h_{n/2}(x)$ is strictly increasing on $[\tfrac{n}{2},\infty)$. Let $g(x):=x^2-x-\left(\frac{n}{2}\right)^2+1$. Since $g(n/2)=-\tfrac{n}{2}+1<0$, it implies that $\tfrac{n}{2}<\lambda_0$. Indeed, we have $h_{n/2}(\lambda_0)=\tfrac{n}{2}-\lambda_0<0$.   It remains to  prove that $h_{n/2}(x)$ is strictly increasing on $[\tfrac{n}{2},\infty)$. Since  
\[
h'_{n/2}(x)=3x^2-2x-(\frac{n}{2})^2\ge 3(\frac{n}{2})^2-2(\frac{n}{2})-(\frac{n}{2})^2=\frac{n^2}{2}-n>0
\] for $x\ge \frac{n}{2}$, we have  $h_{n/2}(x)$ is strictly increasing on $[\tfrac{n}{2},\infty)$.
Hence $\rho(G)\le\rho(F_{a,b})\le\lambda_0<\eta(n)$.
 
 Now suppose that $a$ is odd.  There is one unmatched vertex in the maximum matching, and the partition into this vertex, the $a-1$ matched vertices, and the partite class of size $b$ is equitable.  Its quotient matrix is
\[
Q_{a,b}=
\begin{pmatrix}
		0 & 0 & b\\
		0 & 1 & b\\
		1 & a-1 & 0
\end{pmatrix}.
\]
By \Cref{equitable partition}, $\rho(F_{a,b})$ is the largest root of $p_{a,b}(x):=x^3-x^2-abx+b$.  We now distinguish the cases $a\ne n/2$ and $a=n/2$.
		
If $a\ne \frac{n}{2}$, then we claim that $\rho(F_{a,b})<\eta(n)$. Using the monotonicity of $p_{a,b}(x)$, it suffices to prove that $\eta(n)>\tfrac{n}{2}$, $p_{a,b}(\eta(n))>0$, and $p_{a,b}(x)$ is strictly increasing on $[\tfrac{n}{2},\infty)$. We compare this polynomial with $h_{n/2}$. Since $a+b=n$, we have $ab=b(n-b)=(\tfrac{n}{2})^2-(\tfrac{n}{2}-b)^2$.  Hence, using $h_{n/2}(\eta(n))=0$,
\[
p_{a,b}(\eta(n))
=((\tfrac{n}{2})^2-ab)\eta(n)+b-\frac{n}{2}
=(\frac{n}{2}-b)\bigl((\frac{n}{2}-b)\eta(n)-1\bigr).
\]
 Since $h_{n/2}(\tfrac{n}{2})=\tfrac{n}{2}-(\tfrac{n}{2})^2<0$, we have $\eta(n)>\tfrac{n}{2}$. Next, we show that $p_{a,b}(\eta(n))>0$.  Recall that $a\ne \tfrac{n}{2}$ and $b\ne \tfrac{n}{2}$.  Since $\tfrac{n}{2},a,b$ are all odd, $\tfrac{n}{2}-b$ is a nonzero even integer.  Thus, for $\tfrac{n}{2}-b>0$, we have $\tfrac{n}{2}-b\ge2$ and $(\tfrac{n}{2}-b)\eta(n)-1>0$.  For $\tfrac{n}{2}-b<0$, the two factors $\tfrac{n}{2}-b$ and $(\tfrac{n}{2}-b)\eta(n)-1$ are both negative.  In either case,
\[
p_{a,b}(\eta(n))>0.
\]
Finally, by $ab\le (\tfrac{n}{2})^2$,  we have
\[
p'_{a,b}(x)=3x^2-2x-ab\ge 3\left(\frac{n}{2}\right)^2-n-ab\ge n(\frac{n}{2}-1)>0
\]
for $x\ge \tfrac{n}{2}$.  Hence $p_{a,b}(x)$ is strictly increasing on $[\tfrac{n}{2},\infty)$.  If $\rho(F_{a,b})\ge \tfrac{n}{2}$, then $p_{a,b}(\rho(F_{a,b}))=0<p_{a,b}(\eta(n))$. We have $\rho(F_{a,b})<\eta(n)$ as  $p_{a,b}(x)$ is strictly increasing on $[\tfrac{n}{2},\infty)$. If $\rho(F_{a,b})<\tfrac{n}{2}$,  then we also have $\rho(F_{a,b})<\tfrac{n}{2}<\eta(n)$. Thus $\rho(F_{a,b})<\eta(n)$ for $a\ne \frac{n}{2}$.
		
If $a=b=\tfrac{n}{2}$, then \Cref{equitable partition} gives $\rho(F_{n/2,n/2})=\eta(n)$, and equality forces $G=F_{n/2,n/2}\in\mathcal{J}_{n,2}$.

\noindent\textbf{Case 3:} Let $n\equiv2\pmod4$ and let $k\ge4$ be even. Note that $\tfrac{n}{2}$ and $k-1$  are both odd.  We first identify the only possible
equality configuration. Let $G^\ast=G[V_0]\vee G[V_1]$.  By \Cref{join spectral bound},
\[
\rho(G)\le \rho(G^\ast) \le \frac{d_0+d_1+\sqrt{(d_0-d_1)^2+4|V_0||V_1|}}{2}.
\]
If $d_0+d_1\le k-2$, then
\[
\rho(G)\le\frac{k-2+\sqrt{(k-2)^2+4(\frac{n}{2})^2}}{2}<\psi(n,k).
\]
Thus equality forces $d_0+d_1=k-1$.  Since $k-1$ is odd, the inequality $|d_0-d_1|<k-1$ implies $|d_0-d_1|\le k-3$, and then
\[
\rho(G)\le\frac{k-1+\sqrt{(k-3)^2+4(\frac{n}{2})^2}}{2}<\psi(n,k).
\]
Hence equality forces $\{d_0,d_1\}=\{k-1,0\}$ by $d_0+d_1=k-1$ and $|d_0-d_1|=k-1$.

Relabel so that $d_0=k-1$ and $d_1=0$.  If $|V_0||V_1|\le (\frac{n}{2})^2-1$, then
\[
\rho(G)\le \frac{k-1+\sqrt{(k-1)^2+4((\frac{n}{2})^2-1)}}{2}=\psi(n,k).
\]
Equality in the preceding inequality requires $G$ to be $G[V_0]\vee G[V_1]$ with partite class sizes $\{\frac{n}{2}-1,\frac{n}{2}+1\}$, and equality in \Cref{join spectral bound}. Hence, the side with maximum degree $k-1$ is $(k-1)$-regular. Moreover, if $G[V_0]$ contains a member of $\mathcal{M}(\widehat{T})$, then the complete bipartite join would contain $\widehat{T}$, contradicting the hypothesis. This implies that  $G\in \mathcal{H}_{n,k-1}$.

It remains to rule out the balanced case $|V_0|=|V_1|=\frac{n}{2}$. We show that then $\rho(G)<\psi(n,k)$. Let $H:=G[V_0]$.	Since $d_1=0$, we have	$G^\ast=H\vee \overline{K_{n/2}}.$ Because $G\subseteq G^\ast$, it is enough to prove that
\[
\rho(G^\ast)<\psi(n,k).
\]
Let $\lambda:=\rho(G^\ast)$. Since all vertices in the independent side have the same neighbors, their Perron-vector coordinates are equal. Thus let $\mathbf{x^\ast} = (x_u^{\ast})_{u \in V(G^\ast)}$ be a Perron vector of $G^\ast$, defined by
\[
x_u^{\ast} = \begin{cases}
z_v, & \text{if } u = v \in V(H), \\
y,   & \text{if } u \in V(G^\ast)\setminus V(H).
\end{cases}
\] 
Let $r_v:=k-1-d_H(v)$ for $v\in V(H)$. Since both $k-1$ and $\frac{n}{2}$ are odd, $H$ cannot be $(k-1)$-regular, and therefore some $r_v$ is positive. This implies that $\sum_{v\in V(H)}r_v\ge 1$. Choose a vertex $u\in V(H)$ such that $z_u$ is minimum among all vertices in  the subgraph $H$.

Summing the eigen-equations over $V(H)$ gives
\[
\lambda \sum_{v\in V(H)}z_v=\sum_{v\in V(H)}d_H(v)z_v+(\frac{n}{2})^2y=(k-1)\sum_{v\in V(H)}z_v-\sum_{v\in V(H)}r_v z_v+(\frac{n}{2})^2y.
\]
Since every vertex on the independent side is adjacent to every vertex of $V(H)$, we also have
\[
\lambda y=\sum_{v\in V(H)}z_v.
\]
Hence
\begin{equation}\label{deficiency-identity}
\lambda^2-(k-1)\lambda-(\frac{n}{2})^2=-\frac{\lambda\sum_{v\in V(H)}r_vz_v}{\sum_{v\in V(H)}z_v}.
\end{equation}
The next claim shows that the right-hand side of \eqref{deficiency-identity} is greater than one.
\begin{claim}\label{bound}
 $\frac{\lambda\sum_{v\in V(H)}r_vz_v}{\sum_{v\in V(H)}z_v}> 1$.
\end{claim}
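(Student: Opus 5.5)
The plan is to compare the weighted deficiency with $y$. Since $\lambda y=\sum_{v\in V(H)}z_v$, the quantity in Claim~\ref{bound} equals exactly $\sum_{v\in V(H)}r_vz_v/y$. So it suffices to show
\[
\sum_{v\in V(H)} r_v\,\frac{z_v}{y}>1 .
\]
To do this I need fairly precise lower bounds on the ratios $z_v/y$, and some information on the deficiency vector $(r_v)$.

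\emph{Parity.} Here $\sum_v r_v=(k-1)\frac n2-2e(H)$. Since $(k-1)\frac n2$ is odd, $\sum_v r_v$ is odd, so either $\sum_v r_v\ge 3$, or $\sum_v r_v=1$. In the second case exactly one vertex $w$ has $r_w=1$, i.e.\ $d_H(w)=k-2$, and every other vertex of $H$ has degree $k-1$.

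\emph{Estimates for $\lambda$ and $z_v/y$.} First, $\lambda$ is at most the Tait bound of \Cref{join spectral bound}, so $\lambda\le \frac n2+\frac{k-1}{2}+O(k^2/n)$; also $\lambda\ge \frac n2$. The eigen-equation gives
\[
\lambda z_v=\sum_{w\in N_H(v)}z_w+\tfrac n2 y .
\]
Hence $z_v\ge \frac{n}{2\lambda}y$ for all $v$. Feeding this back once into the equation gives
\[
\frac{z_v}{y}\ \ge\ \frac{n}{2\lambda}+d_H(v)\frac{n}{2\lambda^2}\ =\ 1-\frac{k-1-2d_H(v)}{n}-O\!\left(\frac{k^2}{n^2}\right).
\]
In particular every ratio satisfies $z_v/y\ge 1-\frac{k-1}{n}-O(k^2/n^2)$.

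\emph{Case $\sum_v r_v\ge 3$.} Then
\[
\sum_v r_v\frac{z_v}{y}\ \ge\ 3\Bigl(1-\frac{k-1}{n}-O\!\left(\frac{k^2}{n^2}\right)\Bigr)\ >\ 1
\]
for large $n$.

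\emph{Case $\sum_v r_v=1$.} The sum is just $z_w/y$, with $d_H(w)=k-2$. The refined bound gives
\[
\frac{z_w}{y}\ \ge\ 1+\frac{k-3}{n}-O\!\left(\frac{k^2}{n^2}\right),
\]
which exceeds $1$ for $k\ge 4$ and $n$ large.

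This last step is the main obstacle: it needs the second-order term handled honestly, not just asymptotically. I would carry it out with explicit inequalities rather than $O(\cdot)$. Set $c:=\frac{n}{2\lambda}y$, a common lower bound for all $z_v$. Then
\[
z_w\ \ge\ \frac{(k-2)c+\frac n2 y}{\lambda}
\qquad\text{with}\qquad
\lambda<\frac n2+\frac{k-1}{2}+\frac{(k-1)^2}{2n},
\]
and I would check $(k-2)\frac n2+\frac n2\lambda>\lambda^2$. The margin is about $(k-3)\frac n2$ against error terms of order $k^2$, so it holds for $k\ge4$ and $n$ large. If the margin turns out too thin, I would iterate the eigen-equation once more, using that all neighbours of $w$ (and their neighbours) have degree close to $k-1$.
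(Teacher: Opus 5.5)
Your argument is correct, but it takes a different route from the paper's.

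\emph{The paper's route.} The paper never splits on $\sum_v r_v$. It picks $u$ with $z_u$ minimal, which makes the eigen-equation at $u$ self-referential: $(\lambda-d_H(u))z_u\ge \frac n2 y$. It combines this with $\sum_v r_vz_v\ge(\sum_v r_v)z_u$ and the crude estimate $\lambda<\frac n2+k-2$. A split on $r_u=0$ versus $r_u\ge 1$ then finishes the proof. It uses only $\sum_v r_v\ge 1$ and involves no asymptotics.

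\emph{Your route.} You rewrite the quantity as $\sum_v r_vz_v/y$ and dispose of $\sum_v r_v\ge 3$ with the trivial bound $z_v/y\ge\frac{n}{2\lambda}$. In the tight case $\sum_v r_v=1$, you iterate the eigen-equation twice to show that the unique deficient vertex $w$ has $z_w>y$. This reduces to $\lambda^2-\frac n2\lambda<(k-2)\frac n2$.

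That check does go through. Write $\lambda=\frac n2+t$. The Tait bound gives $t\le\frac{k-1}{2}+\frac{(k-1)^2}{4n}$, and
\[
\lambda^2-\tfrac n2\lambda-(k-2)\tfrac n2=\tfrac n2\bigl(t-(k-2)\bigr)+t^2\le-\tfrac{(k-3)n}{4}+O(k^2)<0
\]
once $n\gg k^2$. So your fallback of iterating once more is unnecessary.

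Some remarks on your version:
\begin{itemize}
\item It genuinely needs this second-order bound on $\lambda$. The paper's cruder $\lambda<\frac n2+k-2$ would not make your inequality negative.
\item It needs $n$ large relative to $k^2$. This is harmless, since the theorem is only claimed for large $n$.
\item The parity step is dispensable. Since $\lambda<n$, you have $z_v/y\ge\frac{n}{2\lambda}>\frac12$, which already handles $\sum_v r_v\ge 2$.
\item In your display, the equality with a one-sided $O(k^2/n^2)$ term should be an inequality $\ge$, because only an upper bound on $\lambda$ is used.
\end{itemize}

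What your approach buys is transparency. It exhibits the first-order effect $z_w/y\approx 1+\frac{k-3}{n}$ that makes the claim true. The paper's minimum-entry trick is shorter and avoids any largeness assumption.
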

\begin{proof}[Proof of Claim \ref{bound}]
At the vertex $u$, we have
\[
\lambda z_u=\sum_{w\in N_H(u)}z_w+\frac{n}{2}y\ge d_H(u)z_u+\frac{n}{2}y=(k-1-r_u)z_u+\frac{n}{2}y.
\]
Thus
\[
(\lambda-k+1+r_u)z_u\ge \frac{n}{2}y=\frac{n\sum_{v\in V(H)}z_v}{2\lambda}
\]
and
\[
\frac{\lambda z_u}{\sum_{v\in V(H)}z_v}\ge \frac{n}{2(\lambda-k+1+r_u)}.
\]
Since $\sum_{v\in V(H)}r_vz_v\ge\left(\sum_{v\in V(H)}r_v\right)z_u$ by the choice of  $u$, we obtain
\begin{equation}\label{deficiency-lower}
\frac{\lambda\sum_{v\in V(H)}r_vz_v}{\sum_{v\in V(H)}z_v}\ge \frac{\lambda z_u \sum_{v\in V(H)}r_v}{\sum_{v\in V(H)}z_v}\ge \frac{n\sum_{v\in V(H)}r_v}{2(\lambda-k+1+r_u)}.
\end{equation}
Applying \Cref{join spectral bound} directly to $G^\ast=H\vee \overline{K_{n/2}}$,
we have
\[
\lambda\le \frac{k-1+\sqrt{(k-1)^2+4(\frac{n}{2})^2}}{2}<\frac{n}{2}+k-2,
\]
where the last inequality holds as $k\ge 4$ and $\tfrac{n}{2}\ge 3$. Hence
\[
\lambda-k+1+r_u<\frac{n}{2}+r_u-1.
\]
    
Recall that $\sum_{v\in V(H)}r_v\ge 1$.  If $r_u=0$, then
\[
\frac{\lambda\sum_{v\in V(H)}r_vz_v}{\sum_{v\in V(H)}z_v}\ge \frac{n\sum_{v\in V(H)}r_v}{2(\lambda-k+1+r_u)} > \frac{n/2}{n/2-1}> 1.
\]
 If $r_u\ge1$, then $\sum_{v\in V(H)}r_v\ge r_u$, and 
 \[
\frac{\lambda\sum_{v\in V(H)}r_vz_v}{\sum_{v\in V(H)}z_v}>\frac{r_u n/2}{n/2+r_u-1}\ge 1,
\]
 where the last inequality follows from $(r_u-1)(\tfrac{n}{2}-1)\ge 0$. Hence
\[
 \frac{\lambda\sum_{v\in V(H)}r_vz_v}{\sum_{v\in V(H)}z_v}> 1.
 \]
\end{proof}

Combining this with \eqref{deficiency-identity}, we obtain $\lambda^2-(k-1)\lambda-(\frac{n}{2})^2<-1$.	Therefore
\[
\lambda<\frac{k-1+\sqrt{(k-1)^2+4(\frac{n}{2})^2-4}}{2}=\psi(n,k),
\]
which proves that the balanced case cannot yield equality. Thus Case 3 is completed.
    
We finally verify that the three spectral alternatives in the definition of $\tau(n,k)$ are all attained for infinitely many admissible pairs $(n,k)$.

First let $k=2$ and $n\equiv2\pmod4$.  Write $n=4L+2$, and choose $L$ large enough that $n$ is sufficiently large.  Let $G'$ be obtained from $K_{n/2,n/2}$ by embedding a maximum matching in one partite set.  The embedded matching $F$ is triangle-free, and every component has order $2<|T|$. Hence $F$ contains neither $T$ nor any member of $\widehat{\cS(T)}$ in  \Cref{decomp sus tree}. In particular, $F$ is $\mathcal{M}(\widehat{T})$-free. Hence  \Cref{prop} shows that $G'$ is $\wh T$-free.  The partition into the unmatched vertex, the $\tfrac{n}{2}-1$ matched vertices in the same part, and the other part of size $\tfrac{n}{2}$ is equitable with quotient matrix
\[
\begin{pmatrix}
0&0&\frac{n}{2}\\
0&1&\frac{n}{2}\\
1&\frac{n}{2}-1&0
\end{pmatrix}.
\]
Its characteristic polynomial is
\[
       x^3-x^2-\frac{n^2}{4}x+\frac n2.
\]
Therefore, the largest eigenvalue is $\eta(n)$ by definition, and $\rho(G')=\eta(n)$.  This proves sharpness for infinitely many $n\equiv2\pmod4$when $k=2$.

Next consider the case in which $k\ge4$ is even and $n\equiv2\pmod4$.  Fix such an even $k$ and choose $L$ sufficiently large so that $n=4(k-1)L+2$ is sufficiently large.  Put
\[
a=\frac n2-1=2(k-1)L,\qquad b=\frac n2+1.
\]
Since $2(k-1)\mid a$,  we take $F$ to be the disjoint union of $L$ copies of $K_{k-1,k-1}$ and embed $F$ in the partite class of size $a$ of $K_{a,b}$. Since $F$ is $\mathcal{M}(\widehat{T})$-free, the resulting graph
\[
 G'=F\vee \overline{K_b}
\]
is $\wh T$-free by \Cref{prop} and has an equitable quotient matrix
\[
 \begin{pmatrix}
k-1 & b\\
  a & 0
\end{pmatrix}.
\]
Consequently,
\[
\begin{aligned}
 \rho(G')
 &=\frac{k-1+\sqrt{(k-1)^2+4ab}}{2}  \\
&=\frac{k-1+\sqrt{(k-1)^2+n^2-4}}{2}=\psi(n,k),
\end{aligned}
\]
because $4ab=4(\frac n2-1)(\frac n2+1)=n^2-4$.  Hence $\psi(n,k)$ is sharp for infinitely many  $n$ for each fixed even $k\ge4$.

Finally consider the  case in which $ n\not\equiv2\pmod4$ or $ k$  is odd. Fix $k\ge2$ and choose $L$ sufficiently large so that $n=4(k-1)L$ is sufficiently large.  Then $n\equiv0\pmod4$, and hence this pair $(n,k)$ belongs to this case.  Put
\[
        a=b=\frac n2=2(k-1)L.
\]
Since $2(k-1)\mid a$, we may take $F$ to be the disjoint union of $L$ copies of $K_{k-1,k-1}$.  Then $F$ is a $(k-1)$-regular triangle-free graph. Each component has order $2k-2<|T|$. Thus $F$ is $\cM(\wh T)$-free by the argument above.  The join
\[
        G'=F\vee \overline{K_{n/2}}
\]
is therefore $\wh T$-free.  Its natural partition into the two partite classes is equitable with quotient matrix
\[
        \begin{pmatrix}
        k-1 & n/2\\
        n/2 & 0
        \end{pmatrix}.
\]
Thus
\[
        \rho(G')=\frac{k-1+\sqrt{(k-1)^2+n^2}}{2}=\phi(n,k).
\]
Since there are infinitely many choices of $L$, the bound $\phi(n,k)$ is sharp for infinitely many values of $n$  for each fixed $k\ge 2$ in the case that $ n\not\equiv2\pmod4$ or $ k$  is odd.

The proof of \Cref{main} is complete.
\end{proof}

\begin{proof}[Proof of \Cref{disjoint}]
First let $k=8s+1$ with $s\ge1$, and let $T$ be a balanced tree on $2k+1$ vertices that is not a path and satisfies the Erd\H{o}s--S\'os hypothesis for all of its  nontrivial subtrees.  Choose $L$ sufficiently large so that $n=4(k-1)L$ is sufficiently large.  Then $n\equiv0\pmod4$ and $\tfrac n2$ is divisible by $2(k-1)$.
	
Let $F$ be the disjoint union of $\frac{n}{4(k-1)}$ copies of $K_{k-1,k-1}$.  Then $F$ is $\cM(\wh T)$-free: every component has order $2k-2<|T|$, and $F$ is triangle-free with maximum degree $k-1$.  Hence the graph $G'$  obtained from $K_{n/2,n/2}$ by embedding $F$ into one partite class belongs to $\mathcal{G}_{\tfrac n2,k-1}$ and is $\wh T$-free by\Cref{prop}.  Its natural equitable quotient matrix is
\[
\begin{pmatrix} 
k-1&\frac n2\\[2pt] \frac n2&0
\end{pmatrix}.
\]
Hence $\rho(G')=\phi(n,k)$.  Combining with \Cref{main},  this implies that $\spex(n,\widehat{T})=  \phi(n,k)$ and $\SPEX(n,\wh T)\subseteq\mathcal{G}_{\tfrac n2,k-1}$. Therefore, every spectral extremal graph has
\[
e_{\rm sp}=\left(\frac n2\right)^2+\frac{n(k-1)}4
\]
edges.
	
Moreover,
\[
\left\lfloor\frac{2n+k}{4}\right\rfloor=\frac n2+\left\lfloor\frac k4\right\rfloor=2(k-1)L+2s
\]
is even.  Thus the second equality condition in \Cref{classical} gives
\[
\ex(n,\wh T)
=\max_{n_0+n_1=n}\left\{n_0n_1+\left\lfloor\frac{n_0(k-1)}2\right\rfloor\right\}.
\]
Taking $n_0=\tfrac n2+2s$ and $n_1=\tfrac n2-2s$ yields
\[
\ex(n,\wh T)\ge\left(\frac n2+2s\right)\left(\frac n2-2s\right)+\frac{(k-1)(\frac n2+2s)}2=e_{\rm sp}+4s^2.
\]
Hence $\EX(n,\wh T)\cap\SPEX(n,\wh T)=\emptyset$ in the case $|T|=2k+1$.
	
Now let $k=8s$ with $s\ge1$, and let $T$ be a balanced tree on $2k$ vertices that is not a path, contains a matching of size $k$, and satisfies the Erd\H{o}s--S\'os hypothesis for all of its nontrivial subtrees.  Choose $L$ sufficiently large so that $n=2kL$ is sufficiently large.  Then $n\equiv0\pmod4$ and $\tfrac n2=kL$.
	
By K\"onig's theorem, $T$ has no vertex cover of size less than $k$.  Thus every vertex  covering $S$ has $|S|\ge k$, this implies that   every graph in $\widehat{\cS(T)}$ has maximum degree $k$ in \Cref{decomp sus tree}. Hence, in view of \Cref{decomp sus tree}, a graph with maximum degree at most
$k-1$ and with all components of order less than $2k$ contains no member of $\cM(\wh T)$.  Let $F$ be the disjoint union of $L$ copies of $K_k$.  Then $F$ is $(k-1)$-regular of  order $\frac n2$ and $\cM(\wh T)$-free. Embedding $F$ into one partite class of $K_{n/2,n/2}$ gives a graph $G' \in \mathcal{G}_{\tfrac n2,k-1}$ with spectral radius $\phi(n,k)$.  Combining with \Cref{main}, this implies that $\spex(n,\widehat{T})=  \phi(n,k)$ and $\SPEX(n,\wh T)\subseteq\mathcal{G}_{\tfrac n2,k-1}$. Thus every graph in $\SPEX(n,\wh T)$ belongs to $\mathcal{G}_{\tfrac n2,k-1}$ and has
\[
e_{\rm sp}=\left(\frac n2\right)^2+\frac{n(k-1)}4
\]
edges.
	
Since $T$ contains a matching of size $k$, the third equality condition in \Cref{classical} gives 
 \[
\ex(n,\wh T)=\max_{n_0+n_1=n}\left\{n_0n_1+\left\lfloor\frac{n_0(k-1)}2\right\rfloor\right\}.
\]
 Taking $n_0=\tfrac n2+2s$ and $n_1=\tfrac n2-2s$ gives
\[
\ex(n,\wh T)\ge\left(\frac n2+2s\right)\left(\frac n2-2s\right)+\frac{(k-1)(\frac n2+2s)}2=e_{\rm sp}+4s^2-s.
\]
Again, every ordinary extremal graph has strictly more edges than every spectral extremal graph. This implies that $\EX(n,\wh T)\cap\SPEX(n,\wh T)=\emptyset$.
	
Since $s$ is an arbitrary integer and the parameter $L$ can be chosen in infinitely many ways for each fixed $s$,  the first  construction proves the assertion for infinitely many $k=8s+1$, and the second proves it for infinitely many $k=8s$.   This completes the proof of \Cref{disjoint}.
\end{proof}
\section{Concluding remarks}
In the case that $n\not\equiv2\pmod4$ or $k$ is odd, we define the family $\mathcal{G}_{a,k-1}$ as the family of graphs obtained from the complete bipartite graph with partite classes of sizes $a$ and $n-a$ by embedding a $(k-1)$-regular $\mathcal{M}(\widehat{T})$-free graph into the partite class of size $a$.  Moreover, we show that if equality holds in \Cref{main}, then $\SPEX(n,\widehat{T})\subseteq \mathcal{G}_{\floor{n/2},k-1}\cup\mathcal{G}_{\ceil{n/2},k-1}$ for sufficiently large $n$ when $n\not\equiv2\pmod4$ or $k$ is odd. In particular, we observe that $\mathcal{G}_{\ceil{n/2},k-1}=\emptyset$ for $n\equiv1\pmod4$ and even $k$, because $\ceil{n/2}(k-1)$ is odd. Similarly, $\mathcal{G}_{\floor{n/2},k-1}=\emptyset$ for $n\equiv3\pmod4$ and even $k$. Consequently, if equality holds in \Cref{main}, then $\SPEX(n,\widehat{T})\subseteq \mathcal{G}_{\floor{n/2},k-1}$ when $n\equiv1\pmod4$ and $k$ is even, and $\SPEX(n,\widehat{T})\subseteq \mathcal{G}_{\ceil{n/2},k-1}$ when $n\equiv3\pmod4$ and $k$ is even.

Since a path is a special balanced tree, our argument yields  the spectral Tur\'an number of $\widehat{P}_\ell$ for sufficiently large $n$ as a corollary, which was determined by Zhang~\cite{Z}. Moreover, together with the lower-bound constructions for $\widehat{P}_\ell$ in \cite{Z}, we also characterize all extremal graphs for $\spex(n,\widehat{P}_\ell)$ when $n$ is large enough. However, characterizing the extremal graphs for $\spex(n,\widehat{T})$ for a general tree $T$ remains difficult, because suitable lower-bound constructions are not known; this is an interesting open problem.

Finally, recall that \Cref{classical} is proved for $n\ge4(4k)^6$, while \Cref{main} only holds for sufficiently large $n$, as our proof relies on \Cref{stability}. It would be interesting to find an alternative approach that yields an explicit bound on $n$ in \Cref{main} without appealing to \Cref{stability}.
\section*{Acknowledgments}
The authors extend their sincere gratitude to their supervisor, Professor Xing Peng, for his insightful suggestions, which have greatly improved the presentation of this work. 


\begin{thebibliography}{99}
\bibitem{CDT22}S. Cioab\u{a}, D. Desai and M. Tait, The spectral radius of graphs with no odd wheels, {\it European J. Combin.}, {\bf 99} (2022),  No.103420, 19pp.
\bibitem{CFTZ20}S. Cioab\u{a}, L. Feng, M. Tait and X.-D. Zhang, The maximum spectral radius of graphs without friendship subgraphs, {\it Electron. J. Combin.}, {\bf 27}(4) (2020), No.4.22, 19pp.
\bibitem{D22}D. Desai, L. Kang, Y. Li, Z. Ni, M. Tait and J. Wang, Spectral extremal graphs for intersecting cliques, \emph{Linear Algebra Appl.}, \textbf{644} (2022), 234-258.
\bibitem{FTZ24}L. Fang, M. Tait and M. Zhai, Tur\'an numbers for non-bipartite graphs and applications to spectral extremal problems, \emph{Discrete Math.} \textbf{348}(10) (2025), No. 114527, 16pp.
 \bibitem{GR01}C. Godsil and G. Royle, \emph{Algebraic Graph Theory}, Grad. Texts in Math., vol. 207, Springer-Verlag, New York, 2001.
\bibitem{LP22}Y. Li and Y. Peng, The spectral radius of graphs with no intersecting odd cycles, \emph{Discrete Math.}, \textbf{345}(8) (2022), No. 112907, 16pp.
\bibitem{M05}A. McLennan, The Erd\H{o}s-S\'os conjecture for trees of diameter four. \emph{J. Graph Theory}, \textbf{49}(4) (2005), 291-301.
\bibitem{NWK23} Z. Ni, J. Wang and L. Kang, Spectral extremal graphs for disjoint cliques, \emph{Electron. J. Combin.}, \textbf{30}(1) (2023), No.1.20, 16pp.
\bibitem{N02}V. Nikiforov, Some inequalities for the largest eigenvalue of a graph, \emph{Combin. Probab. Comput.}, \textbf{11}(2) (2002),  179–189.
\bibitem{N10}V. Nikiforov, The spectral radius of graphs without paths and cycles of specified length, \emph{Linear Algebra Appl.}, \textbf{432}(9) (2010), 2243–2256.
\bibitem{S89} A. Sidorenko, Asymptotic solution for a new class of forbidden r-graphs. \emph{Combinatorica}, \textbf{9}(2) (1989), 207-215.
\bibitem{S} M. Simonovits, Extremal graph problems with symmetrical extremal graphs. Additional chromatic conditions, \emph{Discrete Math.}, \textbf{7} (1974), 349-376.
\bibitem{MT19}M. Tait, The Colin de Verdi\`ere parameter, excluded minors, and the spectral radius, \emph{J. Combin. Theory Ser. A}, \textbf{166} (2019), 42–58.
\bibitem{T41} P. Tur\'an, On an extremal problem in graph theory, \emph{Mat. Fiz. Lapok}, \textbf{48} (1941), 436--452.
\bibitem{WNKF}J. Wang, Z. Ni, L. Kang and Y.-Z.~Fan, Spectral extremal graphs for edge blow-up of star forests, {\it Discrete Math.}, {\bf 347} (2024), 114141.
\bibitem{WKX23}J. Wang, L. Kang and Y. Xue, On a conjecture of spectral extremal problems, {\it J. Combin. Theory Ser. B}, {\bf 159} (2023), 20--41.
\bibitem{Y21} L.-T. Yuan, Extremal graphs for odd wheels, \emph{J. Graph Theory}, \textbf{98}(4) (2021), 691--707.
\bibitem{Zhan}X. Zhan, {\it Matrix Theory}, Graduate Studies in Mathematics, vol. 147, Amer. Math. Soc., Providence, RI, 2013.
\bibitem{Z}W. Zhang, Spectral extrema of graphs forbidding a fan, arXiv:2508.05911.
\bibitem{Zhu}X. Zhu, X. Wang, Y. Zhang and F. Zhang, Tur\'an problems for suspension of a balanced tree, \emph{J. Graph Theory}, \textbf{113}(1) (2026) 61--72.
\end{thebibliography}
\end{document}